\newtheorem{thm}{Theorem}[section]
\newtheorem{cor}[thm]{Corollary}
\newtheorem{prop}[thm]{Proposition}
\newtheorem{lem}[thm]{Lemma}
\newtheorem{quest}[thm]{Question}
\newtheorem*{thm*}{Theorem}
\newtheorem*{prop*}{Proposition}
\newtheorem*{lem*}{Lemma}
\newtheorem*{cor*}{Corollary}
\newtheorem{lem'}{Lemma}
\newtheorem{thm'}{Theorem}
\newtheorem{prop'}{Proposition}
\theoremstyle{definition}
\newtheorem*{exmp*}{Example}
\newtheorem*{exer*}{Exercise}
\theoremstyle{remark}
\newtheorem{rem}[thm]{Remark}
\newtheorem*{claim*}{Claim}
\newtheorem*{rem*}{Remark}
\newtheorem{rem'}{Remark}
\newtheorem*{qtn'}{Question}
\newtheorem*{soln'}{Solution}
\DeclareMathOperator{\Isom}{Isom}
\DeclareMathOperator{\dist}{dist}
\DeclareMathOperator{\Div}{Div}
\DeclareMathOperator{\Hom}{Hom}
\DeclareMathOperator{\Spec}{Spec}
\DeclareMathOperator{\SL}{SL}
\DeclareMathOperator{\C}{\mathbb{C}}
\DeclareMathOperator{\Pic}{Pic}
\DeclareMathOperator{\Cl}{Cl}
\DeclareMathOperator{\Bass}{Bass}
\DeclareMathOperator{\spin}{spin}
\DeclareMathOperator{\PSL}{PSL}
\DeclareMathOperator{\SO}{SO}
\DeclareMathOperator{\Spin}{Spin}
\DeclareMathOperator{\R}{\mathbb{R}}
\DeclareMathOperator{\Z}{\mathbb{Z}}
\renewcommand{\P}{\mathbb{P}}
\DeclareMathOperator{\D}{\mathcal{D}}
\DeclareMathOperator{\Aut}{Aut}
\renewcommand{\O}{\textnormal{O}}
\numberwithin{equation}{section}
\title{A Spectral Gap for Spinors on Hyperbolic Surfaces}
\author{Anshul Adve}
\email{aadve@princeton.edu}
\author{Vikram Giri}
\email{vikramaditya.giri@math.ethz.ch}
\begin{document}
	
		\begin{abstract}
				The purpose of this note is to construct a sequence of spin hyperbolic surfaces $\Sigma_n$ with genus going to infinity and with a uniform spectral gap for the Dirac operator. Our construction is completely explicit. In particular, the $\Sigma_n$ can be taken to be a tower of covers, with each $\Sigma_n$ an arithmetic hyperbolic surface.
		\end{abstract}
	
	\maketitle
 
\section{Introduction}

Let $\Sigma$ be a hyperbolic surface of genus $g\geq 2$ with a choice of spin structure. \emph{Throughout, all surfaces are connected, closed, and oriented.} In this paper, we are interested in obtaining uniform spectral gaps for the spin Laplacian, or equivalently the Dirac operator, on covers of $\Sigma$. The Dirac operator $\mathcal{D}$ acting on sections of the spinor bundle $S$, defined in Section~\ref{sec:Dirac}, is an elliptic, skew-adjoint, first order differential operator on $L^2(\Sigma,S)$. We define the \emph{spin Laplacian} on $\Sigma$ to be the square $\Delta^{\spin} = \mathcal{D}^2$. Since $\mathcal{D}$ is elliptic and skew-adjoint, $-\Delta^{\spin}$ is elliptic and positive-definite on $L^2(\Sigma,S)$. 

Let $\lambda_0^{\spin}(\Sigma)$ denote the smallest eigenvalue of $-\Delta^{\spin}$. It is possible, though not always the case, that $\lambda_0^{\spin} = 0$. We say that the \emph{spectral gap} for the Dirac operator on $\Sigma$ is $\sqrt{\lambda_0^{\spin}}$.
Given a cover $\Sigma'$ of $\Sigma$, the Riemannian metric and spin structure on $\Sigma$ pull back to $\Sigma'$, and using this structure we define $\lambda_0^{\spin}(\Sigma')$.
It makes sense, then, to ask for an infinite family of covers of $\Sigma$ with spectral gap for the Dirac operator uniformly bounded below.

{
Writing $\Sigma = \Gamma \backslash \mathbb{H}^2$ for some lattice $\Gamma < \PSL_2(\R)$, the spin structure determines a lift $\Gamma \hookrightarrow \SL_2(\R)$ (and such a lift determines the spin structure). Then $\lambda_0^{\spin}(\Sigma) = 0$ if and only if there exists a nonzero modular form of weight 1 for $\Gamma$.
It is notoriously difficult to tell whether such forms exist for a given group $\Gamma < \SL_2(\R)$.
In particular, the Riemann--Roch theorem gives a simple formula for the dimension of the space of modular forms of weight $k$ for $k \geq 2$, but says nothing for $k=1$.
The spectral gap for the Dirac operator can be viewed as a quantification of the non-existence of weight 1 forms.
}

We have that $0$ is in the $L^2$-spectrum of $\Delta^{\spin}$ on the upper half-plane $\mathbb{H}^2$; see Remark~\ref{rem.ltwohtwo}. So, it is easy to see that for any sequence of pointed hyperbolic surfaces $(\Sigma_n,p_n)$ where the injectivity radius at $p_n$ goes to infinity, it must be the case that $\lambda_0^\text{spin}(\Sigma_n) \to 0$ regardless of the choice of spin structure on each $\Sigma_n$. This gives an obstruction to producing {sequences with a spectral gap}, which rules out both number-theoretic constructions based on congruence surfaces, and probabilistic constructions based on random covers or random points in moduli spaces. Our main theorem is the following:

\begin{thm}[Main Theorem] \label{thm:main}

        There exist $c > 0$, a spin arithmetic hyperbolic surface $\Sigma$, and a tower of covers 
  $$\cdots \to \Sigma_2 \to \Sigma_1 \to \Sigma_0 = \Sigma$$ 
  with genus going to infinity, such that $\lambda_0^{\spin}(\Sigma_n) \geq c$ for all $n$.
	\end{thm}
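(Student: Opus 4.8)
The plan is to take the $\Sigma_n$ to be a tower of cyclic covers of one carefully chosen arithmetic spin surface $\Sigma$ (of genus $g$). Fix a primitive class $\phi\in H^1(\Sigma,\Z)$; by Poincaré duality it is dual to a non-separating simple closed geodesic $\gamma$, and I let $\Sigma_n\to\Sigma$ be the degree-$2^n$ cyclic cover corresponding to $\pi_1(\Sigma)\xrightarrow{\phi}\Z\to\Z/2^n$ — geometrically, cut $\Sigma$ along $\gamma$ and reglue $2^n$ copies cyclically. These covers are connected (as $\phi$ is primitive), arithmetic (finite-index subgroups of an arithmetic lattice are arithmetic), of genus $2^n(g-1)+1\to\infty$, and they form a tower $\cdots\to\Sigma_1\to\Sigma_0=\Sigma$; moreover their injectivity radius stays bounded, so they are not excluded by the obstruction recalled above. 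Since the metric and spin structure on each $\Sigma_n$ are pulled back, the Dirac operator is $\Z/2^n$-equivariant, and Fourier-decomposing over the deck group yields
\[
L^2(\Sigma_n,S)=\bigoplus_{j=0}^{2^n-1}L^2(\Sigma,S\otimes L_{j/2^n}),\qquad \mathcal{D}_{\Sigma_n}=\bigoplus_{j=0}^{2^n-1}\mathcal{D}_{j/2^n},
\]
where $L_\theta$ is the flat unitary line bundle on $\Sigma$ with holonomy $\gamma'\mapsto e^{2\pi i\theta\langle\phi,\gamma'\rangle}$ and $\mathcal{D}_\theta$ is the Dirac operator twisted by $L_\theta$. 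Hence $\lambda_0^{\spin}(\Sigma_n)=\min_{0\le j<2^n}\lambda_0(-\mathcal{D}_{j/2^n}^2)$, and the whole problem moves to the fixed surface $\Sigma$.

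Second, I would reduce to a single operator on a compact space. Realizing $L_\theta$ as the trivial bundle with a flat unitary connection varying real-analytically in $\theta$, the operators $\mathcal{D}_\theta$ form a real-analytic family of skew-adjoint elliptic operators on the fixed bundle $S$ over the fixed compact $\Sigma$; by standard perturbation theory $\lambda_0(\theta):=\lambda_0(-\mathcal{D}_\theta^2)$ is continuous in $\theta\in\R/\Z$, and $\lambda_0(\theta)=0$ exactly when $\ker\mathcal{D}_\theta\ne0$. Since the holonomies $j/2^n$ are dense in $\R/\Z$ and $\{\theta:\lambda_0(\theta)=0\}$ is closed, it now suffices to choose $\Sigma$ and $\phi$ so that $\ker\mathcal{D}_\theta=0$ for \emph{every} $\theta\in\R/\Z$; then $c:=\min_{\theta\in\R/\Z}\lambda_0(\theta)>0$ is a gap valid for all $\Sigma_n$ at once. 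This has a chance precisely because there is no constant spinor: for the scalar Laplacian the same scheme fails, the character $\theta=0$ always contributing eigenvalue $0$ via constants.

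Third, I would reinterpret the condition algebro-geometrically. Writing $K$ for the canonical bundle and $K^{1/2}$ for the chosen theta characteristic, a nonzero element of $\ker\mathcal{D}_\theta$ gives a nonzero holomorphic section of $K^{1/2}\otimes L_\theta$ or of $K^{1/2}\otimes L_{-\theta}$, and these have equal dimension by Serre duality; so $\ker\mathcal{D}_\theta\ne0$ iff the degree-$(g-1)$ class $[K^{1/2}\otimes L_\theta]$ lies on the theta divisor $\Theta\subset\Pic^{g-1}(\Sigma)$. As $\theta$ runs over $\R/\Z$ this class sweeps out a real circle $C_\phi$ in the $2g$-real-dimensional torus $\Pic^{g-1}(\Sigma)$, and since $\Theta$ has real codimension $2$ a generic such circle misses $\Theta$ — so the heart of the matter is to exhibit an \emph{explicit} arithmetic $\Sigma$, spin structure, and primitive $\phi$ for which $C_\phi\cap\Theta=\varnothing$ can be proved. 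This is the step I expect to be the main obstacle. I would attack it with a low-genus arithmetic surface carrying a large automorphism group (for instance the Bolza surface in genus two, whose Jacobian is isogenous to a product of CM elliptic curves), choosing $K^{1/2}$ to be an even theta characteristic (so that $h^0(K^{1/2})=0$ to begin with) and $\phi$ compatible with the symmetries; then one either pulls $\Theta$ back along the isogeny to an explicitly described curve in a product of elliptic curves and checks by hand that the subtorus lifting $C_\phi$ avoids it, or uses the automorphism action to force every isotypic component of $H^0(K^{1/2}\otimes L_\theta)$ to vanish.

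Finally, given such $\Sigma$ and $\phi$, the covers $\Sigma_n$ from the first paragraph meet every requirement of the theorem, and the decomposition there gives $\lambda_0^{\spin}(\Sigma_n)\ge\min_{\theta\in\R/\Z}\lambda_0(-\mathcal{D}_\theta^2)=c>0$ for all $n$, completing the proof.
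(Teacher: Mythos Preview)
Your strategic reduction is exactly the one the paper carries out: Fourier-decompose over the deck group of the abelian cover, use continuity of the twisted first eigenvalue to pass from the dense set of torsion characters to the whole circle $T\subset\widehat{H_1(\Sigma,\Z)}$, and rephrase $\ker\mathcal{D}_\chi\ne0$ as $h^0(K_\Sigma^{1/2}\otimes L_\chi)>0$ (this is the paper's Proposition~\ref{prop:bloch} together with Lemma~\ref{lem:harmonic_implies_holo}). Where your proposal stops short---and you rightly flag it---is the one place where the real content lies: exhibiting a specific arithmetic triple $(\Sigma,K_\Sigma^{1/2},T)$ for which $K_\Sigma^{1/2}\otimes L_\chi$ has no nonzero holomorphic section for \emph{every} $\chi\in T$. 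The dimension count ``$\Theta$ has real codimension~$2$, so a generic real circle misses it'' is encouraging but not a proof, since arithmeticity of $\Sigma$ and integrality of $\phi$ are rigid constraints and one cannot perturb.

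The paper resolves this with an explicit verification rather than a genericity argument. It takes $\Sigma\colon y^2=x^6-1$ (not Bolza; arithmetic via the $(2,6,6)$ triangle group), the theta characteristic $\frac{1}{2}K_\Sigma=(\zeta_6,0)+(-\zeta_6,0)-(1,0)$, and obtains $T$ by pulling back a circle from the elliptic quotient $E\colon y^2=x^4-x$ along $f(x,y)=(x^2,xy)$. The non-existence of sections is then pure divisor algebra: assuming $\frac{1}{2}K_\Sigma+f^*\Div(L_\chi)\sim(q)$ for some point $q$, one pushes forward along a \emph{second} elliptic quotient $f'\colon\Sigma\to E'$, $(x,y)\mapsto(x^2,y)$, uses that $f'_*f^*$ kills $\Cl^0(E)$ to force $q=(\pm1,0)$, and reaches a contradiction with the classification of linearly equivalent degree-$2$ effective divisors on a hyperelliptic genus-$2$ curve (Lemma~\ref{lem:deg_2_divisor_lemma}). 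This is close in spirit to your suggestion of pulling $\Theta$ back along an isogeny $\Jac(\Sigma)\sim E\times E'$, but it is executed concretely at the level of divisors. Without such an explicit check (or an equivalent one), your argument is an accurate outline of the paper's method rather than a complete proof.
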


The study of spinors on surfaces has a long history going back to Riemann. Here we just mention a few works relevant to us. Atiyah, in~\cite{atiyah}, connected the theory of spinors on Riemann surfaces to some classical algebraic geometry and reproved some theorems using index theory. Sarnak, in~\cite{sarnak}, related the \emph{determinant} of the spin Laplacian to the central value of a twisted Selberg zeta function.
In the recent work~\cite{palspin}, the authors use methods from conformal field theory to prove interesting, rigorous bounds relating the spectral gap for spinors to that of the Laplacian on functions. 

\subsection{Relation to coclosed 1-form spectra on hyperbolic 3-manifolds}\label{s.3f}
In the work \cite{aaglz}, the authors construct families of hyperbolic $3$-manifolds with volume going to infinity with a uniform spectral gap on coclosed $1$-forms. By Hodge theory, these are all rational homology spheres. The setting of that paper and the present one share a lot of common features. This stems from the representation theoretic fact that the relevant representations of $\Isom^+(\mathbb{H}^2)$ and $\Isom^+(\mathbb{H}^3)$ having the smallest Casimir eigenvalue are tempered in both cases. In both works, we are able to construct manifolds with volume going to infinity having a uniform spectral gap above this smallest tempered eigenvalue. 

A uniform gap above the smallest tempered eigenvalue is not possible for the Laplacian on functions by the work of Huber~\cite{Huber}. Indeed, there one can use the fact that the terms in the geometric side of the Selberg trace formula involve the lengths of closed geodesics and are, consequently, positive. However, in the case of coclosed $1$-forms on hyperbolic $3$-manifolds or spinors on hyperbolic surfaces, the analogous Selberg trace formulae involve the {complex} length or the twist by the character arising from the spin structure, respectively, and thus the terms on the geometric side do not have a fixed sign. 

\subsection{The bass note spectrum}
The \emph{bass note spectrum} for the spin Laplacian on a family $\mathcal{F}$ of spin hyperbolic surfaces is the closed subset of $[0,\infty)$ defined as
$$ \Bass^{\text{spin}}_{\mathcal{F}} = \overline{\{\lambda_0^\text{spin}(\Sigma) : \Sigma\in\mathcal{F}\}}\,. $$
The bass note spectrum can be analogously defined for other differential operators on families of geometric structures: the reader is encouraged to look at the lectures of Sarnak~\cite{sarnak_chern} for a nice overview and further references.

When $\mathcal{F} = \text{hyp}$ consists of all spin hyperbolic surfaces (closed, connected, and oriented as usual), there exists a constant $C>0$ such that $\Bass_{\text{hyp}}^{\text{spin}} \subseteq [0,C]$. The existence of $C$ means that there is a universal upper bound for the first eigenvalue of the spin Laplacian on any spin hyperbolic surface. This bound relies on the fact that one can isometrically embed a hyperbolic disc of a fixed small radius into any hyperbolic surface $\Sigma$; see Appendix \ref{app} for a simple proof. The bound then follows from the variational characterization of $\lambda_0^{\spin}$, using a test function supported on the embedded disc. We remark that this method yields upper bounds on the spectral gap for a large class of invariant differential operators acting on sections of various bundles on $\Sigma$.

When $\mathcal{F}=\text{arith}$ consists of spin arithmetic hyperbolic surfaces, our main Theorem~\ref{thm:main} implies, in view of the above upper bound, that $\Bass^{\text{spin}}_{\text{arith}}$ has a non-zero limit point. 

\subsection{Relation to number fields}
{This subsection is largely speculatory in nature. Consider the twisted Selberg zeta function for a spin hyperbolic surface as considered by Sarnak in~\cite{sarnak}. Our main theorem~\ref{thm:main} can be reformulated as follows: there exists a tower of covers of a spin arithmetic hyperbolic surface such that the zeros of their corresponding twisted zeta functions are uniformly bounded away from the central point $1/2$. Note that for the usual Selberg zeta functions, the trace formula implies that some zeros must approach $1/2$ along any sequence of surfaces with volume going to infinity.
This is equivalent to the result of Huber \cite{Huber} mentioned above.

This perspective allows us to contemplate the existence of analogous phenomena in the realm of number fields via the knots-and-primes correspondence. This is an analogy between $3$-manifolds and global fields. For our purposes, consider the following analogies between a number field $K$ and (the unit tangent bundle of) a hyperbolic surface $M$:
\begin{center}
\begin{tabular}{|c|c|}\hline
    $\log($discriminant of $K)$ & The genus of $M$\\\hline
    {$\Spec \mathcal{O}_K$}
    & The unit tangent bundle $T^1M$\\\hline
    The Guinand--Weil formula & The Selberg trace formula\\\hline
    The Dedekind zeta function $\zeta_K(s)$ & The Selberg zeta function $Z_M(s)$\\\hline
\end{tabular}
\end{center}
A choice of spin structure is a choice of a degree $2$ cover of $3$-manifolds ${P} \to T^1M$ satisfying a certain nontriviality condition; see Section~\ref{subsec:spinors}. If we regard this as being analogous to a degree $2$ (unramified) extension $L/K$ of number fields, then our main theorem~\ref{thm:main} suggests a positive answer to the following question.
\begin{quest}
    Do there exist infinitely many pairs of number fields $(L,K)$ such that $L/K$ is a degree $2$ (unramified) extension and the zeros of $\zeta_L(s)/\zeta_K(s)$ are uniformly bounded away from the central point $1/2$?
\end{quest}
Again, for the Dedekind zeta functions, it is known under GRH that the lowest-lying zero must approach $1/2$ along any sequence of distinct number fields; see~\cite[GRH Proposition 5.2]{tsvl}. This fact is proved using that the ``geometric side'' of a (limiting) Guinand--Weil formula has terms with a fixed sign in analogy with the discussion in Section~\ref{s.3f}.} However, a positive answer to the above question {would tell} us that such a statement is no longer true for $L$-functions of quadratic Hecke characters.

\subsection{Questions}
We end this introduction with a series of open questions motivated by other works on similar problems.
\begin{quest}
    Does there exist a sequence of spin hyperbolic surfaces with a uniform spectral gap for both functions and spinors?
\end{quest}
The above question is motivated by the theory of high dimensional expanders~\cite{lub} asking for uniform joint gaps in the spectra of more than one self-adjoint operator. An analogous question has been asked (c.f.~\cite[Question~1]{aaglz}) in the context of hyperbolic $3$-manifolds.

\begin{quest}
    Can one have the spinor spectral gap be non-zero but exponentially small in volume along a sequence of covers? 
\end{quest}
Here we are motivated by the analogy between spinors on hyperbolic surfaces and coclosed $1$-forms on hyperbolic $3$-manifolds. In the $3$-manifold setting, Rudd in~\cite{rudd} has proved that one can indeed have an exponentially small gap (though his construction is not a sequence of covers, but just a sequence with volume going to infinity).

\begin{quest}
    Describe the structure of the bass note spectrum for spinors on arithmetic surfaces. In particular, do we have $[0,c] \subset \Bass^{\textnormal{spin}}_{\textnormal{arith}}$ for some $c>0$?
\end{quest}
In the work~\cite{magee}, Magee shows that the maximal interval $[0,1/4]$ is contained in the bass note spectrum for functions on arithmetic hyperbolic surfaces.

\begin{subsection}*{Acknowledgements}
We wish to thank Peter Sarnak for suggesting this problem, and for telling us about the Riemann vanishing theorem and Bloch wave theory, which inspired our proof. Both authors thank the Simons Collaboration on the Localization of Waves for their support during the completion of this project.
The first author also thanks the National Science Foundation for its support via the Graduate Research Fellowship Program under Grant No. DGE-2039656.
\end{subsection}

\section{Setup and Preliminaries}

\subsection{Spinors on Riemannian surfaces} \label{subsec:spinors}

Let $\Sigma$ be a Riemannian surface. A spin structure on $\Sigma$ is a double cover $P$ of the unit tangent bundle $T^1\Sigma$, such that the $\SO(2)$-action on $T^1\Sigma$ by rotation lifts to a $\Spin(2)$-action on $P$ making $P$ into a principal $\Spin(2)$-bundle. Let $\pi \colon \Spin(2) \to \SO(2)$ be the canonical projection map, and $\rho \colon \Spin(2) \to \SO(2)$ the pointwise square root of $\pi$. If one identifies $\Spin(2) \simeq U(1) \simeq \SO(2)$, then $\pi,\rho \colon U(1) \to U(1)$ are given by $\pi(z) = z^2$ and $\rho(z) = z$. Thus $\rho$ is an isomorphism of groups. Viewing $\rho$ as a representation of $\Spin(2)$ on $\R^2$, one can form the associated bundle $S = P \times_{\rho} \R^2$. 
By defintion, this is the quotient of $P \times \R^2$ by the diagonal action of $\Spin(2)$, where $\Spin(2)$ acts via $\rho$ on $\R^2$.
The bundle $S$ is the \emph{spinor bundle} on $\Sigma$. It is a real vector bundle of rank $2$, and it inherits a metric from the standard metric on $\R^2$. Sections of $S$ are called \emph{spinor fields}, or simply \emph{spinors}. 

\subsection{Theta characteristics}
	
	Let $\Sigma$ be a spin Riemannian surface with spinor bundle $S$. In order to define the Dirac operator $\D$ on sections of $S$, it suffices to define $\D$ on sections of the complexification $S_{\C}$, and then check that $\D$ commutes with complex conjugation. Recall that $S = P \times_{\rho} \R^2$ is the bundle associated with the representation $\rho$ of $\Spin(2)$. The complexification of this representation decomposes as $\rho_{\C} \simeq \rho_+ \oplus \rho_-$, where $\rho_{\pm} \colon \Spin(2) \to \C^{\times}$ are the complex one-dimensional representations $\rho_{\pm} = \sigma_{\pm} \circ \rho$, with $\sigma_{\pm} \colon \SO(2) \to \C^{\times}$ given by
	\begin{align*}
		\sigma_{\pm}
		\begin{pmatrix}
			\cos\theta & -\sin\theta \\
			\sin\theta & \cos\theta
		\end{pmatrix}
		= e^{\pm i\theta}.
	\end{align*}
	Therefore $S_{\C} \simeq S_+ \oplus S_-$, where $S_{\pm}$ are the complex line bundles $S_{\pm} = P \times_{\rho_{\pm}} \C$. Complex conjugation on $S_{\C}$ interchanges $S_{\pm}$, taking $(p,z) \in S_+$ to $(p,\overline{z}) \in S_-$.
	
	Recalling that $\pi = \rho^2$ denotes the canonical projection from $\Spin(2)$ to $\SO(2)$, we note that $\rho_{\pm}^2 = \sigma_{\pm} \circ \pi$. So, we see that there is a natural map of complex line bundles from $S_+^{\otimes 2}$ to the canonical bundle $K_{\Sigma}$ given as the composition
	\begin{align*}
		S_+^{\otimes 2}
		\simeq P \times_{\rho_+^2} \C
		= P \times_{\sigma_+ \circ \pi} \C
		\simeq T^1\Sigma \times_{\sigma_+} \C
		\to K_{\Sigma},
	\end{align*}
	where the final map takes $(v,z) \in T^1\Sigma \times_{\sigma_+} \C$ to the cotangent vector $\xi \in K_{\Sigma}$ satisfying $\xi(v) = z$. This composition $S_+^{\otimes 2} \to K_{\Sigma}$ is an isomorphism. Equip $S_+$ with the holomorphic structure induced from this isomorphism. Then $S_+$ is a holomorphic line bundle squaring to the canonical bundle, or in other words, $S_+$ is a \textit{theta characteristic}.
	
	Conversely, suppose given a theta characteristic $K_{\Sigma}^{1/2}$. Denoting the dual of a complex line bundle $L$ by $L^*$, there is a quadratic map $q \colon (K_{\Sigma}^{1/2})^* \to T\Sigma$ given as the composition of the squaring map $(K_{\Sigma}^{1/2})^* \to K_{\Sigma}^*$ followed by the isomorphism (of real rank $2$ vector bundles) $K_{\Sigma}^* \simeq T\Sigma$. The preimage $P = q^{-1}(T^1\Sigma)$ is then a spin structure --- it is a double cover of $T^1\Sigma$ which is a principal bundle for the induced $\Spin(2)$-action.
	
	We have now described how to construct theta characteristics from spin structures and vice versa. It is not difficult to check that these constructions are inverses of each other, and hence establish a bijection between spin structures and theta characteristics \cite{atiyah}.

\subsection{The Dirac operator}\label{sec:Dirac}
	For any holomorphic line bundle $L$, the complex analytic $\overline{\partial}$ operator is a first order differential operator taking sections of $L$ to sections of $\overline{K_{\Sigma}} \otimes L$. Let $\D_-$ be the first order differential operator taking sections of $S_+$ to sections of $S_-$, by
	\begin{align*}
		\D_-
		\colon S_+ \xrightarrow{\overline{\partial}} \overline{K_{\Sigma}} \otimes S_+
		\simeq \overline{S_+^{\otimes 2}} \otimes S_+
		\simeq P \times_{\overline{\rho_+^2} \rho_+} \C
		= P \times_{\rho_-} \C
		= S_-.
	\end{align*}
	Define $\D_+$ from sections of $S_-$ to sections of $S_+$ by complex conjugation symmetry, so $\D_+s = \overline{\D_- \overline{s}}$. Finally, define the Dirac operator on sections of $S_{\C} \simeq S_+ \oplus S_-$ by
	\begin{align*}
		\D =
		\begin{pmatrix}
			0 & \D_+ \\
			\D_- & 0
		\end{pmatrix}.
	\end{align*}
	By construction, $\D$ commutes with complex conjugation, so $\D$ descends to an operator on sections of $S$. Since $\overline{\partial}$ is elliptic, $\D$ is elliptic. One can check that the adjoint of $\D_{\pm}$ is $-\D_{\mp}$. It follows that $\D$ is skew-adjoint as an operator on $L^2(\Sigma,S)$.
	
	The spin Laplacian $\Delta^{\spin} = \D^2$ preserves the decomposition $S_{\C} \simeq S_+ \oplus S_-$. From this we obtain the following trivial proposition which will be useful later.
	
	\begin{prop}
		The spectrum of $\Delta^{\spin}$ on the real Hilbert space $L^2(\Sigma,S)$ is the same as on the complex Hilbert space $L^2(\Sigma,S_+)$, but with twice the multiplicity. In particular, $\lambda_0^{\spin}(\Sigma)$ is the smallest eigenvalue of $-\Delta^{\spin}$ on $L^2(\Sigma,S_+)$.
	\end{prop}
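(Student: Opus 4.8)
The plan is to pass through the complexification and exploit the two symmetries already recorded in Section~\ref{sec:Dirac}: that $\Delta^{\spin}$ commutes with complex conjugation on $S_{\C}$, and that it preserves the splitting $S_{\C} \simeq S_+ \oplus S_-$. First I would identify $L^2(\Sigma, S_{\C})$ with the complexification $L^2(\Sigma,S) \otimes_{\R} \C$ and observe that $\Delta^{\spin}$ on $L^2(\Sigma, S_{\C})$ is precisely the complexification of $\Delta^{\spin}$ on $L^2(\Sigma,S)$ --- this is the content of the fact that $\D$, hence $\D^2$, descends from $S_{\C}$ to $S$. Since $-\Delta^{\spin}$ is elliptic, self-adjoint and positive on the closed surface $\Sigma$, both operators have discrete real spectrum with finite multiplicities, and for a real operator $T$ with complexification $T_{\C}$ one has $\ker(T_{\C} - \lambda) = \ker(T-\lambda) \otimes_{\R} \C$ for every real $\lambda$ (decompose an eigenvector into real and imaginary parts). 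Hence the spectrum of $\Delta^{\spin}$ on $L^2(\Sigma,S)$ coincides as a set with that on $L^2(\Sigma,S_{\C})$, and the real multiplicity of each eigenvalue $-\lambda$ on $L^2(\Sigma,S)$ equals its complex multiplicity on $L^2(\Sigma,S_{\C})$.

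Next I would use that $\Delta^{\spin}$ preserves $S_{\C} \simeq S_+ \oplus S_-$, so each eigenspace $\ker(\Delta^{\spin} + \lambda)$ in $L^2(\Sigma,S_{\C})$ splits as the direct sum of its intersections with $L^2(\Sigma,S_+)$ and $L^2(\Sigma,S_-)$. Because complex conjugation interchanges $S_+$ and $S_-$ and commutes with $\Delta^{\spin}$, it restricts to a conjugate-linear isomorphism from the $\lambda$-eigenspace in $L^2(\Sigma,S_-)$ onto the $\lambda$-eigenspace in $L^2(\Sigma,S_+)$; in particular these two have equal complex dimension. Combining with the previous paragraph, the real multiplicity of $-\lambda$ on $L^2(\Sigma,S)$ is exactly twice its complex multiplicity on $L^2(\Sigma,S_+)$, and the two spectra agree as sets. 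The statement about $\lambda_0^{\spin}$ is then immediate, since the smallest element of a set is unchanged by doubling multiplicities.

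There is no serious obstacle; the proposition is essentially formal, which is why it is labelled trivial. The only points needing a little care are (i) making the identification of $L^2(\Sigma,S_{\C})$ with a genuine complexification precise, so that ``realness'' of $\Delta^{\spin}$ is exactly the commutation with conjugation established above, and (ii) invoking elliptic regularity on the closed surface to ensure the spectrum is discrete with finite multiplicities, so that ``multiplicity'' is well defined in the first place. Both are standard.
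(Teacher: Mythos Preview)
Your argument is correct and follows the same route as the paper: pass to the complexification $L^2(\Sigma,S_{\C}) \simeq L^2(\Sigma,S_+) \oplus L^2(\Sigma,S_-)$, use that $\Delta^{\spin}$ preserves this splitting, and then note that complex conjugation intertwines the two summands. You have simply written out the multiplicity bookkeeping and the role of ellipticity more explicitly than the paper does.
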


	\begin{proof}
		By complexification, the spectrum of $\Delta^{\spin}$ on the real Hilbert space $L^2(\Sigma,S)$ is the same as on the complex Hilbert space $L^2(\Sigma,S_{\C}) \simeq L^2(\Sigma,S_+) \oplus L^2(\Sigma,S_-)$. Since $\Delta^{\spin}$ preserves this decomposition, the spectrum on $L^2(\Sigma,S_{\C})$ is the union (with multiplicity) of the spectra on $L^2(\Sigma,S_+)$ and $L^2(\Sigma,S_-)$. These latter two spectra are the same because $L^2(\Sigma,S_+)$ and $L^2(\Sigma,S_-)$ are intertwined by complex conjugation.
	\end{proof}

 \begin{rem}\label{rem.ltwohtwo}
     The spin Laplacian $\Delta^{\spin}$ acting on sections of the theta characteristic $S_+$ is not quite the Laplacian $\Delta_{(1/2)}$ on weight $1$ forms that one gets from the representation theoretic point of view on hyperbolic surfaces as in~\cite{palspin}. In fact, one has $-\Delta_{(1/2)} = -\Delta^{\spin} + 1/4 $. In~\cite{palspin}, it is explained that the temperedness of the principal series representation $\mathcal{P}^{-}_0$ {of $\SL_2(\R)$} is the reason for $0$ being in the spectrum of $-\Delta^{\spin}$ on $L^2(\mathbb{H}^2,S)$ for the (unique) spin structure on the hyperbolic plane.
 \end{rem}

\subsection{Fourier analysis on abelian covers}\label{sec:bloch}
	
	Let $\Sigma$ be a spin Riemannian surface of genus $g$ with spinor bundle $S$. Let
	\begin{align*}
		\widehat{H_1(\Sigma,\Z)} = \Hom(H_1(\Sigma,\Z),U(1))
	\end{align*}
	denote the Pontryagin dual of the first homology of $\Sigma$. This is a torus of dimension $2g$. Consider a finite subgroup $H$ of this torus. By duality, $H$ corresponds to a finite quotient $H^{\vee}$ of $H_1(\Sigma,\Z)$. Let $\Sigma_H$ be the abelian cover of $\Sigma$ corresponding to $H^{\vee}$, so $\Sigma_H$ has fiber $H^{\vee}$ with monodromy the translation action of $H_1(\Sigma,\Z)$ on $H^{\vee}$. By Fourier analysis,
	\begin{align*}
		L^2(\Sigma_H,S_+)
		= \bigoplus_{\chi \in H} L^2(\Sigma, S_+ \otimes L_{\chi}),
	\end{align*}
	where $L_{\chi}$ is the flat holomorphic line bundle on $\Sigma$ whose sections have monodromy $\chi$. The spin Laplacian preserves this decomposition, so
	\begin{align*}
		\lambda_0^{\spin}(\Sigma_H)
		= \min_{\chi \in H} \lambda_0^{\spin}(\Sigma,\chi),
	\end{align*}
	where $\lambda_0^{\spin}(\Sigma,\chi)$ is the smallest eigenvalue of $-\Delta^{\spin}$ on $L^2(\Sigma, S_+ \otimes L_{\chi})$.
	
	\begin{prop} \label{prop:bloch}
		Let $T \subseteq \widehat{H_1(\Sigma,\Z)}$ be a closed subtorus with $n$-torsion subgroup denoted $T[n]$. Then $\lambda_0^{\spin}(\Sigma_{T[n]})$ is bounded below by a positive constant independent of $n$ if and only if $S_+ \otimes L_{\chi}$ has no nonzero holomorphic section for all $\chi \in T$.
	\end{prop}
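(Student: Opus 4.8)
The plan is to reduce the statement to the continuity, on the torus $\widehat{H_1(\Sigma,\Z)}$, of the single function $\chi\mapsto\lambda_0^{\spin}(\Sigma,\chi)$. Two preliminary observations frame the reduction. First, each $T[n]$ is finite and $T[n]\subseteq T[m]$ whenever $n\mid m$, so $\bigcup_n T[n]$ is precisely the set of torsion points of the real torus $T$, which is dense in $T$; combined with the identity $\lambda_0^{\spin}(\Sigma_H)=\min_{\chi\in H}\lambda_0^{\spin}(\Sigma,\chi)$ from Section~\ref{sec:bloch}, this gives
\[
\inf_n\lambda_0^{\spin}(\Sigma_{T[n]})\;=\;\inf_{\chi\in\bigcup_n T[n]}\lambda_0^{\spin}(\Sigma,\chi).
\]
Second, since $\D_-^*=-\D_+$ (Section~\ref{sec:Dirac}), the restriction of $-\Delta^{\spin}=\D^2$ to the $S_+$-summand is $\D_-^*\D_-$; twisting by the flat unitary line bundle $L_\chi$ and reading through the fixed isomorphism $\overline{K_\Sigma}\otimes S_+\simeq S_-$, this becomes $\overline{\partial}_\chi^*\overline{\partial}_\chi$, where $\overline{\partial}_\chi$ is the Dolbeault operator of the holomorphic line bundle $S_+\otimes L_\chi$. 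Hence $-\Delta^{\spin}$ on $L^2(\Sigma,S_+\otimes L_\chi)$ is nonnegative with kernel $\ker\overline{\partial}_\chi$, so $\lambda_0^{\spin}(\Sigma,\chi)=0$ if and only if $S_+\otimes L_\chi$ has a nonzero holomorphic section (any $L^2$ solution of $\overline{\partial}_\chi s=0$ being smooth by elliptic regularity).

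The crux is the claim that $\chi\mapsto\lambda_0^{\spin}(\Sigma,\chi)$ is continuous, in fact locally Lipschitz, on $\widehat{H_1(\Sigma,\Z)}$. To prove it I would trivialize the bundles $L_\chi$ smoothly, so that every $S_+\otimes L_\chi$ becomes one fixed Hermitian bundle and $\overline{\partial}_\chi=\overline{\partial}_{\chi_0}+a_\chi$ with $a_\chi\in\Omega^{0,1}(\Sigma)$ acting as a zeroth order operator and depending Lipschitz-continuously on $\chi$ (say via harmonic representatives of the corresponding flat connections), so that $\norm{a_\chi-a_{\chi'}}_{C^0(\Sigma)}\leq C\,d(\chi,\chi')$ with $C$ depending only on $\Sigma$. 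Feeding a minimizer $s$ of the Rayleigh quotient $\norm{\overline{\partial}_{\chi'}s}^2/\norm{s}^2$ (which exists since this operator has discrete spectrum) into the analogous quotient for $\chi$, and using $\norm{\overline{\partial}_\chi s}\leq\norm{\overline{\partial}_{\chi'}s}+\norm{a_\chi-a_{\chi'}}_{C^0}\norm{s}$, yields
\[
\lambda_0^{\spin}(\Sigma,\chi)\;\leq\;\lambda_0^{\spin}(\Sigma,\chi')+2\sqrt{M}\,\norm{a_\chi-a_{\chi'}}_{C^0}+\norm{a_\chi-a_{\chi'}}_{C^0}^2,
\]
where $M:=\sup_\chi\lambda_0^{\spin}(\Sigma,\chi)<\infty$; finiteness of $M$ is immediate from the twisted Rayleigh quotient of one fixed test spinor, or from the universal upper bound on $\lambda_0^{\spin}$ discussed in the introduction. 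Interchanging $\chi$ and $\chi'$ gives $\abs{\lambda_0^{\spin}(\Sigma,\chi)-\lambda_0^{\spin}(\Sigma,\chi')}\leq C'\,d(\chi,\chi')$ for nearby $\chi,\chi'$, hence continuity on the compact torus. (One could instead quote Kato's theory of analytic families of operators of type~(A), but the direct estimate is self-contained.)

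Granting continuity, the proposition follows quickly. By density of the torsion points together with continuity, $\inf_{\chi\in\bigcup_n T[n]}\lambda_0^{\spin}(\Sigma,\chi)=\inf_{\chi\in T}\lambda_0^{\spin}(\Sigma,\chi)=\min_{\chi\in T}\lambda_0^{\spin}(\Sigma,\chi)$, the minimum being attained because $T$ is compact; combined with the first reduction, $\inf_n\lambda_0^{\spin}(\Sigma_{T[n]})=\min_{\chi\in T}\lambda_0^{\spin}(\Sigma,\chi)$. This minimum is positive exactly when $\lambda_0^{\spin}(\Sigma,\chi)>0$ for every $\chi\in T$, which by the second reduction is exactly the condition that no $S_+\otimes L_\chi$ with $\chi\in T$ has a nonzero holomorphic section. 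Therefore $\lambda_0^{\spin}(\Sigma_{T[n]})$ is bounded below by a positive constant independent of $n$ — namely $\min_{\chi\in T}\lambda_0^{\spin}(\Sigma,\chi)$ — if and only if this vanishing condition holds, which is the assertion. The only genuinely nontrivial input is the continuity of $\chi\mapsto\lambda_0^{\spin}(\Sigma,\chi)$; everything else is the Bloch/Fourier bookkeeping of Section~\ref{sec:bloch} together with compactness of $T$ and density of its torsion points.
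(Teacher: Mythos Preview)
Your argument is correct and follows the same architecture as the paper's: density of torsion points in $T$ plus continuity of $\chi\mapsto\lambda_0^{\spin}(\Sigma,\chi)$ give $\inf_n\lambda_0^{\spin}(\Sigma_{T[n]})=\min_{\chi\in T}\lambda_0^{\spin}(\Sigma,\chi)$, and this minimum is positive iff no $S_+\otimes L_\chi$ has a nonzero holomorphic section. Your identification of $-\Delta^{\spin}$ on $S_+\otimes L_\chi$ with $\overline{\partial}_\chi^*\overline{\partial}_\chi$ is exactly the content of the paper's Lemma~\ref{lem:harmonic_implies_holo}, just phrased more compactly.

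The one genuine difference is in how continuity (the paper's Lemma~\ref{lem:eigenvalue_cty}) is established. The paper conjugates $\Delta^{\spin}$ to a fixed bundle, then argues that the compact resolvent $(1-\Delta_\chi^{\spin})^{-1}$ varies norm-continuously and invokes Hausdorff-continuity of the spectrum of compact operators (via Kato). You instead write $\overline{\partial}_\chi=\overline{\partial}_{\chi_0}+a_\chi$ with $a_\chi$ a zeroth-order term depending Lipschitz-continuously on $\chi$, and feed a minimizer for one Rayleigh quotient into the other. Your route is more elementary and self-contained, and yields a quantitative local Lipschitz bound on $\lambda_0^{\spin}(\Sigma,\chi)$; the paper's resolvent argument is slightly more abstract but, as noted there, shows continuity of \emph{all} eigenvalues at once without further work.
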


	\begin{proof}
		Let $T[\infty] \subseteq T$ denote the subset of all torsion points. By the discussion above,
		\begin{align*}
			\min_n \lambda_0^{\spin}(\Sigma_{T[n]})
			= \min_{\chi \in T[\infty]} \lambda_0^{\spin}(\Sigma,\chi).
		\end{align*}
		Lemma \ref{lem:eigenvalue_cty} below says that $\lambda_0^{\spin}(\Sigma,\chi)$ depends continuously on $\chi$, so by density of $T[\infty]$ in $T$,
		\begin{align*}
			\min_n \lambda_0^{\spin}(\Sigma_{T[n]})
			= \min_{\chi \in T} \lambda_0^{\spin}(\Sigma,\chi)
		\end{align*}
		(the minimum on the right hand side is attained because $T$ is compact). Therefore $\lambda_0^{\spin}(\Sigma_{T[n]})$ is uniformly bounded below if and only if $\lambda_0^{\spin}(\Sigma,\chi) \neq 0$ for all $\chi$. By Lemma \ref{lem:harmonic_implies_holo} below, the proposition follows.
	\end{proof}
	
	\begin{lem} \label{lem:eigenvalue_cty}
		The eigenvalue $\lambda_0^{\spin}(\Sigma,\chi)$ depends continuously on $\chi \in \widehat{H_1(\Sigma,\Z)}$.
	\end{lem}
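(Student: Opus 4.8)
The plan is to realize the family of operators $-\Delta^{\spin}$ acting on $L^2(\Sigma, S_+\otimes L_\chi)$ as a single operator on a fixed Hilbert space depending on a parameter $\chi$, and then apply a standard perturbation-theoretic continuity statement for the bottom of the spectrum. Concretely, I would first trivialize the family: choosing a basis of loops generating $H_1(\Sigma,\Z)$ and a fundamental domain, one can identify all the flat line bundles $L_\chi$ with the trivial bundle $\underline{\C}$ as \emph{smooth} (not holomorphic) bundles, so that sections of $S_+\otimes L_\chi$ become sections of the fixed bundle $S_+$ with prescribed unitary monodromy. Under this identification the operator $-\Delta^{\spin}_\chi$ becomes a second-order elliptic operator on $L^2(\Sigma,S_+)$ whose coefficients depend smoothly (in fact real-analytically) on $\chi\in\widehat{H_1(\Sigma,\Z)}$; more precisely, writing $\chi = \exp(2\pi i\,\alpha)$ for a harmonic $1$-form $\alpha$ representing the corresponding class, one gets $-\Delta^{\spin}_\chi = -(\D + i\alpha)^2 = -\Delta^{\spin}_0 + (\text{first order in }\alpha) + \abs{\alpha}^2$, a analytic family of type (A) in Kato's sense, with common form domain the Sobolev space $H^1(\Sigma,S_+)$.

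Next I would invoke the variational characterization
\begin{align*}
	\lambda_0^{\spin}(\Sigma,\chi) = \inf_{\substack{s\in H^1(\Sigma,S_+)\\ s\neq 0}} \frac{\norm{(\D+i\alpha)s}^2}{\norm{s}^2},
\end{align*}
which makes upper semicontinuity in $\chi$ immediate: fixing a near-minimizer $s$ for $\chi_0$ and perturbing $\alpha_0\to\alpha$, the Rayleigh quotient changes by $O(\norm{\alpha-\alpha_0})$ uniformly on the (finite-dimensional, hence bounded) space of harmonic representatives, so $\limsup_{\chi\to\chi_0}\lambda_0^{\spin}(\Sigma,\chi)\leq\lambda_0^{\spin}(\Sigma,\chi_0)$. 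Lower semicontinuity is the direction requiring a little more care: one takes a sequence $\chi_k\to\chi_0$ with $\lambda_0^{\spin}(\Sigma,\chi_k)\to\liminf$, picks unit-norm minimizers $s_k$, notes that the $H^1$ norms of $s_k$ are uniformly bounded (since $\norm{(\D+i\alpha_k)s_k}$ is bounded and $\alpha_k$ is bounded, elliptic regularity controls $\norm{s_k}_{H^1}$), extracts a weakly $H^1$-convergent, strongly $L^2$-convergent subsequence $s_k\rightharpoonup s_\infty$ with $\norm{s_\infty}=1$, and uses weak lower semicontinuity of the (perturbed) quadratic form together with the convergence $\alpha_k\to\alpha_0$ to conclude $\norm{(\D+i\alpha_0)s_\infty}^2\leq\liminf\norm{(\D+i\alpha_k)s_k}^2$. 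Hence $\lambda_0^{\spin}(\Sigma,\chi_0)\leq\liminf_k\lambda_0^{\spin}(\Sigma,\chi_k)$, completing the argument.

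The main obstacle is purely bookkeeping: making the smooth trivialization of $L_\chi$ genuinely continuous in $\chi$ and tracking that the resulting perturbation is a \emph{relatively bounded} (indeed lower-order) perturbation of $-\Delta^{\spin}_0$ with the same form core, so that the quadratic-form machinery applies without fuss. Once that is set up, the continuity of $\lambda_0^{\spin}$ is a soft consequence of the min-max principle — indeed the whole spectrum below the essential spectrum (which is empty here, $\Sigma$ being closed) varies continuously, but we only need the bottom eigenvalue. Alternatively, and perhaps more cleanly, one can cite Kato's analytic perturbation theory directly: the family $-\Delta^{\spin}_\chi$ is an analytic family of type (B), its resolvent is compact, so the eigenvalues are given by functions that are continuous (in fact piecewise analytic) in $\chi$, and $\lambda_0^{\spin}(\Sigma,\chi)$, being the pointwise minimum of these, is continuous.
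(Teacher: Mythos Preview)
Your argument is correct and follows a standard template, but it differs from the paper's in both of its two steps. For the trivialization, you use the harmonic $1$-form representative of $\chi$ to write the twisted operator explicitly as $-(\D + i\alpha)^2$ on the fixed bundle $S_+$; the paper instead picks, via a partition of unity, a smooth nowhere-vanishing section $\varphi_{\chi}$ of $L_{\chi}$ depending smoothly on $\chi$ near the identity, and conjugates by multiplication by $\varphi_{\chi_0^{-1}\chi}$ to land on $S_+\otimes L_{\chi_0}$. Your trivialization is more explicit and makes the lower-order nature of the perturbation transparent; the paper's avoids Hodge theory and any worry about the global well-definedness of $\alpha$. For the continuity step, you argue directly with the Rayleigh quotient, proving upper and lower semicontinuity separately via Rellich compactness; the paper instead passes to the resolvent $(1-\Delta_{\chi}^{\spin})^{-1}$, observes that it varies norm-continuously in $\mathcal{B}(L^2)$ and is compact, and then invokes the general fact (Kato, Chapter~2, \S5.1) that the spectrum of a compact operator depends continuously on the operator in the Hausdorff metric. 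Your route is more self-contained and hands-on for $\lambda_0$ specifically; the paper's buys continuity of the entire spectrum at once with a single citation. Your closing remark about Kato's analytic families of type~(B) is closest in spirit to what the paper does, though the paper only uses norm-continuity of the resolvent, not analyticity.
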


	This sort of continuity holds in great generality. In particular, the proof below shows that all the eigenvalues, not just the first, vary continuously.

    {
    \begin{proof}
        {
        Let us check continuity at $\chi_0 \in \widehat{H_1(\Sigma,\Z)}$. By a partition of unity argument, one can choose for each $\chi \approx 1$ a smooth section $\varphi_{\chi}$ of $L_{\chi}$ such that $\varphi_1 = 1$ and $\varphi_{\chi}$ depends smoothly on $\chi$. Then for $\chi \approx 1$, the section $\varphi_{\chi}$ is nowhere vanishing. Thus for $\chi \approx \chi_0$, multiplication by $\varphi_{\chi_0^{-1}\chi}$ gives an isomorphism { (of $C^{\infty}$ complex line bundles)} from $S_+ \otimes L_{\chi_0}$ to $S_+ \otimes L_{\chi}$. Let $\Delta_{\chi}^{\spin}$ be the differential operator on sections of $S_+ \otimes L_{\chi_0}$ given by conjugating $\Delta^{\spin}$ by this isomorphism. Then $\lambda_0^{\spin}(\Sigma,\chi)$ is the first eigenvalue of $-\Delta_{\chi}^{\spin}$.
        }

        Equivalently, $(1+\lambda_0^{\spin}(\Sigma,\chi))^{-1}$ is the largest eigenvalue of the resolvent $(1-\Delta_{\chi}^{\spin})^{-1}$.
        Since $\Delta_{\chi}^{\spin}$ is elliptic, its domain as an unbounded self-adjoint operator on $L^2(\Sigma, S_+ \otimes L_{\chi_0})$ is $H^2(\Sigma, S_+ \otimes L_{\chi_0})$.
        Thus by definition, the above resolvent is the operator on $L^2$ given as the inverse of the isomorphism $1-\Delta_{\chi}^{\spin} \colon H^2 \to L^2$. Since $1-\Delta_{\chi}^{\spin}$ depends continuously on $\chi$ with respect to the operator norm topology on $\mathcal{B}(H^2,L^2)$, its inverse depends continuously with respect to the operator norm topology on $\mathcal{B}(L^2,H^2)$, and in particular on $\mathcal{B}(L^2,L^2)$. Again since $\Delta_{\chi}^{\spin}$ is elliptic, and since $\Sigma$ is a compact surface, the resolvent is a compact operator on $L^2$. It follows from Lemma~\ref{lem:cpt_eigenvalue_cty} below that the spectrum of the resolvent depends continuously on $\chi$. Consequently $\lambda_0^{\spin}(\Sigma,\chi)$ depends continuously on $\chi$.
    \end{proof}

    Given a Banach space $X$, let $\mathcal{K}(X)$ denote the space of compact operators on $X$ with the operator norm topology.
    Let $\mathcal{P}(\C)$ be the space of subsets of $\C$ with the Hausdorff metric.

    \begin{lem} \label{lem:cpt_eigenvalue_cty}
        Let $X$ be a Banach space. Then the map $\mathcal{K}(X) \to \mathcal{P}(\C)$ taking a compact operator to its spectrum is continuous.
    \end{lem}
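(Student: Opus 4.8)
The plan is to prove continuity of the spectrum map directly from the definition of the Hausdorff metric, exploiting the structure of the spectrum of a compact operator: a countable set of eigenvalues accumulating only at $0$, together with $0$ itself. Fix $T \in \mathcal{K}(X)$ and $\varepsilon > 0$; I must produce $\delta > 0$ so that $\norm{S - T} < \delta$ implies $\Spec(S)$ and $\Spec(T)$ are within Hausdorff distance $\varepsilon$. This requires two inclusions: (i) every point of $\Spec(S)$ is within $\varepsilon$ of $\Spec(T)$, and (ii) every point of $\Spec(T)$ is within $\varepsilon$ of $\Spec(S)$. Both will follow from perturbation theory for the resolvent.

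For inclusion (i): let $U$ be the open $\varepsilon$-neighborhood of $\Spec(T)$ in $\C$. Its complement $K = \C \setminus U$, intersected with a large disc $\{|z| \le \norm{T} + 1\}$, is compact and disjoint from $\Spec(T)$, so the resolvent $z \mapsto (z - T)^{-1}$ is defined and norm-bounded, say by $M$, on this compact set. A standard Neumann series argument shows that if $\norm{S - T} < 1/M$, then $z - S$ is invertible for every such $z$; and for $|z| > \norm{T} + 1 \ge \norm{S}$ (taking $\delta \le 1$) invertibility of $z - S$ is automatic. Hence $\Spec(S) \subseteq U$, which is inclusion (i) with this choice of $\delta$. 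The point is that this argument is symmetric in $S$ and $T$ and uses only that resolvents depend continuously (indeed analytically) on the operator where they exist — no compactness is needed for (i).

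Inclusion (ii) is the main obstacle, since a priori a small perturbation could destroy an eigenvalue of $T$ without any nearby eigenvalue of $S$ appearing — this is exactly where compactness enters. Here I would use the Riesz projection: for each eigenvalue $\lambda \in \Spec(T)$ with $|\lambda| \ge \varepsilon/2$ (there are only finitely many, by compactness of $T$), pick a small circle $\gamma_\lambda$ around $\lambda$ of radius less than $\varepsilon/2$ enclosing no other point of $\Spec(T)$, and let $P_\lambda^T = \frac{1}{2\pi i}\oint_{\gamma_\lambda}(z - T)^{-1}\,dz$ be the associated spectral projection, which is nonzero (it has finite positive rank since $T$ is compact and $\lambda \neq 0$). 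If $\norm{S - T}$ is small enough that $z - S$ is invertible on all the circles $\gamma_\lambda$ and $\oint_{\gamma_\lambda}(z-S)^{-1}\,dz$ is close to $\oint_{\gamma_\lambda}(z-T)^{-1}\,dz$, then $P_\lambda^S := \frac{1}{2\pi i}\oint_{\gamma_\lambda}(z - S)^{-1}\,dz$ is close to $P_\lambda^T$, hence nonzero, hence $S$ has spectrum inside $\gamma_\lambda$, i.e.\ within $\varepsilon/2$ of $\lambda$. The remaining points of $\Spec(T)$ lie in $\{|z| < \varepsilon/2\}$, and $0 \in \Spec(S)$ (as $S$ is compact on an infinite-dimensional space; if $X$ is finite-dimensional the statement is elementary and classical, so one may assume $\dim X = \infty$), so those points are automatically within $\varepsilon$ of $\Spec(S)$. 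Taking $\delta$ to be the minimum of the finitely many thresholds arising from (i) and from each $\gamma_\lambda$ completes the proof. The only subtlety to handle carefully is making the resolvent bound on $\gamma_\lambda$ uniform — but each $\gamma_\lambda$ is a fixed compact set disjoint from $\Spec(T)$, so the same Neumann series estimate as in (i) applies verbatim.
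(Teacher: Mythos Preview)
Your argument is correct and is essentially the same approach the paper points to: the paper simply cites Kato's proof via the holomorphic functional calculus and remarks that it carries over from matrices to compact operators, and your Riesz-projection argument for inclusion (ii) is exactly that proof written out, with the one extra observation (needed in infinite dimension) that $0 \in \Spec(S)$ takes care of the small eigenvalues of $T$. Inclusion (i) is the standard upper-semicontinuity of spectrum, valid for all bounded operators, as you note.
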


    \begin{proof}
        For $X$ finite-dimensional, this is proven in Chapter 2, Section 5.1 of \cite{Kato} using the holomorphic functional calculus.
        The same proof works for compact operators in infinite dimension.
    \end{proof}
    }

	\begin{lem} \label{lem:harmonic_implies_holo}
		A section $s$ of $S_+ \otimes L_{\chi}$ (for any $\chi$) is holomorphic if and only if $\Delta^{\spin} s = 0$.
	\end{lem}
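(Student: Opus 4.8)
The plan is to reduce the statement to the classical Weitzenböck / Bochner-type identity relating $\Delta^{\spin}$ to the $\overline\partial$-Laplacian on the twisted line bundle $S_+ \otimes L_\chi$. Since $L_\chi$ is a \emph{flat} holomorphic line bundle, it carries a flat Hermitian metric with no curvature, so twisting by $L_\chi$ does not change any of the differential-geometric identities; I would therefore first reduce to the untwisted case $\chi = 1$ (or carry the $L_\chi$ along passively throughout, which costs nothing). The key point is that, by the very definition of $\D_-$ in Section~\ref{sec:Dirac}, the operator $\D_- \colon \Gamma(S_+ \otimes L_\chi) \to \Gamma(S_- \otimes L_\chi) \cong \Gamma(\overline{K_\Sigma} \otimes S_+ \otimes L_\chi)$ \emph{is} the operator $\overline\partial$ for the holomorphic structure on $S_+ \otimes L_\chi$, up to the fixed bundle isomorphisms identifying $S_-$ with $\overline{K_\Sigma}\otimes S_+$. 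Hence a section $s$ of $S_+ \otimes L_\chi$ is holomorphic precisely when $\D_- s = 0$.

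Next I would show that $\D_- s = 0$ is equivalent to $\Delta^{\spin} s = 0$. One direction is immediate: if $\D_- s = 0$ then $\Delta^{\spin} s = \D^2 s = \D_+ \D_- s = 0$ (using that $\D$ preserves the $S_\pm$ decomposition of $S_\C$, as recorded just before the trivial Proposition). For the converse, suppose $s \in \Gamma(S_+ \otimes L_\chi)$ with $\Delta^{\spin} s = 0$, i.e. $\D_+ \D_- s = 0$. Then, pairing in $L^2$,
\begin{align*}
    0 = \langle \D_+ \D_- s, s\rangle = \langle \D_- s, \D_-^* s\rangle \cdot(-1)?
\end{align*}
— more precisely, I use the adjoint relation stated in Section~\ref{sec:Dirac} that $(\D_\pm)^* = -\D_\mp$, so $\D_+ = -(\D_-)^*$, whence $\langle \D_+\D_- s, s\rangle = -\langle (\D_-)^*\D_- s, s\rangle = -\|\D_- s\|_{L^2}^2$. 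Since this vanishes, $\D_- s = 0$, i.e. $s$ is holomorphic. This integration-by-parts step is valid because $\Sigma$ is closed, so there are no boundary terms, and all sections in sight are smooth by elliptic regularity (a harmonic section of an elliptic operator is automatically smooth).

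I expect the main obstacle — though it is more bookkeeping than genuine difficulty — to be checking carefully that the chain of canonical isomorphisms $\overline{K_\Sigma}\otimes S_+ \cong \overline{S_+^{\otimes 2}}\otimes S_+ \cong P\times_{\overline{\rho_+^2}\rho_+}\C = S_-$ used to define $\D_-$ is compatible with the Hermitian metrics, so that the formal adjoint of $\D_-$ computed as a twisted $\overline\partial$-operator genuinely agrees with $-\D_+$ under these identifications; this is exactly what makes the Weitzenböck identity $\Delta^{\spin}|_{S_+} = -(\D_-)^*\D_-$ hold on the nose rather than up to a zeroth-order error, and it is why the flatness of $L_\chi$ matters (a nonflat twist would introduce a curvature term, changing the kernel). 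Once that compatibility is in hand — which follows from the fact that the metric on $S_+$ was \emph{defined} via the isomorphism $S_+^{\otimes 2}\cong K_\Sigma$ and the metric on $L_\chi$ is flat — the argument above goes through verbatim, giving the desired equivalence for every $\chi$.
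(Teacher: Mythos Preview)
Your proposal is correct and follows essentially the same route as the paper: identify holomorphicity of $s$ with $\D_- s = 0$ via the definition of $\D_-$ as $\overline\partial$ composed with bundle isomorphisms, and then use the adjoint relation $\D_+^* = -\D_-$ together with elliptic regularity to get $0 = \langle \D_+\D_- s, s\rangle = -\|\D_- s\|^2$. Your closing paragraph about a possible Weitzenb\"ock curvature term is unnecessary worry: the identity $\Delta^{\spin}|_{S_+\otimes L_\chi} = \D_+\D_- = -(\D_-)^*\D_-$ holds on the nose by definition of $\Delta^{\spin}$ and the stated adjoint relation, with no zeroth-order correction, so flatness of $L_\chi$ plays no role in this lemma beyond ensuring that the unitary metric on $L_\chi$ is available for the $L^2$ pairing.
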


	\begin{proof}
		The easy direction is when $s$ is holomorphic. Then $\Delta^{\spin} s = \D_+ \D_- s = 0$, because $\D_-$ is defined as a composition of operators beginning with $\overline{\partial}$.
		
		In the reverse direction, suppose $\Delta^{\spin} s = 0$. Then by elliptic regularity, $s$ is smooth. Thus we can write
		\begin{align*}
			0 = \langle \Delta^{\spin} s,s \rangle_{L^2(\Sigma,S_+ \otimes L_{\chi})}
			= \langle \D_+ \D_- s,s \rangle_{L^2(\Sigma,S_+ \otimes L_{\chi})}
			= -\|\D_- s\|_{L^2(\Sigma,S_- \otimes L_{\chi})}^2,
		\end{align*}
		where in the last equality we have integrated by parts, using that $\D_+^* = -\D_-$. Hence $\D_-s = 0$. Since $\D_-$ is given by $\overline{\partial}$ followed by an isomorphism, we conclude that $\overline{\partial}s = 0$, i.e., $s$ is holomorphic.
	\end{proof}

\section{The explicit construction} \label{sec:construction}
	
	In this section, we construct (one example of) the objects whose existence is asserted in Theorem \ref{thm:main}. Their desired properties are proven in Sections \ref{sec:gap_pf} and \ref{sec:uniformization}.
	
	From now on, ``curve" is short for ``smooth complex projective curve." However, we will often present curves by affine models, and denote points on a curve by their coordinates in affine space in the given model.
	
	Let $\Sigma$ be the genus 2 curve $y^2 = x^6-1$. We will see in Section \ref{sec:uniformization} that this is an arithmetic Riemann surface. Let $\frac{1}{2}K_{\Sigma}$ be the divisor
	\begin{align*}
		(\zeta_6,0)  + (-\zeta_6,0) - (1,0)
	\end{align*}
	on $\Sigma$, where $\zeta_n = e^{2\pi i/n}$, and let $K_{\Sigma}^{1/2} = \mathcal{O}_{\Sigma}(\frac{1}{2}K_{\Sigma})$ be the corresponding line bundle. By Corollary \ref{cor:2(branch_pt)} in Section \ref{sec:genus_2}, we see that $2(\frac{1}{2}K_{\Sigma})$ is linearly equivalent to the canonical divisor, so $K_{\Sigma}^{1/2}$ is a theta characteristic.
	
	Let $E$ be the genus 1 curve $y^2 = x^4 - x$, and let $f \colon \Sigma \to E$ be the degree 2 map
	\begin{align*}
		f(x,y)
		= (x^2,xy).
	\end{align*}
	Choose a one-dimensional closed subtorus $T_E$ of $\widehat{H_1(E,\Z)}$ such that the unique nonzero 2-torsion point of $T_E$ is the monodromy character of the multi-valued function $\sqrt{1-x^{-1}}$ on $E$ (note that the zeros and poles on $E$ of $1-x^{-1}$ are of order 2, so locally there is always a meromorphic square root with no need for branch cuts). Let $T$ be the image of $T_E$ in $\widehat{H_1(\Sigma,\Z)}$ under the pullback map $f^*$. Then $T$ is a one-dimensional closed subtorus of $\widehat{H_1(\Sigma,\Z)}$. Let $\Sigma_n = \Sigma_{T[2^n]}$. Then the $\Sigma_n$ form a tower of covers of $\Sigma_0 = \Sigma$.
	
	\begin{thm} \label{thm:no_holo_sections}
		For all $\chi \in T$, there are no nonzero holomorphic sections of $K_{\Sigma}^{1/2} \otimes L_{\chi}$.
	\end{thm}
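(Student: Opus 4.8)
The plan is to use $f$ to reduce the question to the elliptic curve $E$, and then to an elementary statement about $2$-torsion characters.

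First I would rewrite the theta characteristic. Since $\zeta_6^2=\zeta_3$, both $(\zeta_6,0)$ and $(-\zeta_6,0)$ lie in the fibre $f^{-1}((\zeta_3,0))$; being two distinct points of a degree-$2$ fibre, they exhaust it, so $\mathcal{O}_\Sigma((\zeta_6,0)+(-\zeta_6,0))=f^{*}\mathcal{O}_E((\zeta_3,0))$. Writing $W=(1,0)\in\Sigma$, the divisor $\tfrac12 K_\Sigma$ then gives $K_\Sigma^{1/2}\cong f^{*}\mathcal{O}_E((\zeta_3,0))\otimes\mathcal{O}_\Sigma(-W)$. Every $\chi\in T=f^{*}(T_E)$ is $f^{*}\psi$ for some $\psi\in T_E$, with $L_\chi=f^{*}L_\psi$, so
\[
K_\Sigma^{1/2}\otimes L_\chi\;\cong\;f^{*}M\otimes\mathcal{O}_\Sigma(-W),\qquad M:=\mathcal{O}_E((\zeta_3,0))\otimes L_\psi\in\Pic^{1}(E).
\]
Since a degree-$1$ line bundle on the genus-$2$ curve $\Sigma$ has a nonzero section iff it is $\mathcal{O}_\Sigma(q)$ for a point $q$, the theorem becomes: $f^{*}M\not\cong\mathcal{O}_\Sigma(W+q)$ for every $q\in\Sigma$ and every $\psi\in T_E$.

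Next I would determine for which $M\in\Pic^{1}(E)$ there is a $q$ with $f^{*}M\cong\mathcal{O}_\Sigma(W+q)$, the claim being: exactly for $M\cong\mathcal{O}_E((1,0))$ and $M\cong\mathcal{O}_E((0,0))$. For the first, $f^{*}\mathcal{O}_E((1,0))=\mathcal{O}_\Sigma((1,0)+(-1,0))$, so $q=(-1,0)$; for the second, $f^{*}\mathcal{O}_E((0,0))=\mathcal{O}_\Sigma((0,i)+(0,-i))$, a fibre of the hyperelliptic map, hence $\cong K_\Sigma\cong\mathcal{O}_\Sigma(2W)$, so $q=W$. Conversely, suppose $f^{*}M\cong\mathcal{O}_\Sigma(W+q)$. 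If $h^0(\mathcal{O}_\Sigma(W+q))=2$, this bundle is $K_\Sigma$ (the only degree-$2$ bundle with $h^0=2$ on a genus-$2$ curve), so $f^{*}M=K_\Sigma=f^{*}\mathcal{O}_E((0,0))$, and injectivity of $f^{*}$ on $\Pic$ (valid for a ramified double cover) gives $M\cong\mathcal{O}_E((0,0))$. If $h^0(\mathcal{O}_\Sigma(W+q))=1$, then $h^0(\Sigma,f^{*}M)=1$; since $H^0(E,M)$ is one-dimensional and pulls back injectively into $H^0(\Sigma,f^{*}M)$, the section of $f^{*}M$ is $f^{*}t$ for $t$ spanning $H^0(E,M)$, and its divisor $f^{*}(\operatorname{div} t)=W+q$ forces $\operatorname{div} t=f(W)=(1,0)$, i.e.\ $M\cong\mathcal{O}_E((1,0))$.

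Finally, translating to characters: the dangerous $\psi$ are those with $L_\psi=M\otimes\mathcal{O}_E(-(\zeta_3,0))$ equal to $\mathcal{O}_E((1,0)-(\zeta_3,0))$ or $\mathcal{O}_E((0,0)-(\zeta_3,0))$. The points $(0,0),(1,0),(\zeta_3,0),(\zeta_3^{2},0)$ are the $2$-torsion points of $E$ (origin $(0,0)$), so both dangerous characters are nonzero $2$-torsion elements of $\Pic^{0}(E)$, corresponding under $\Pic^{0}(E)[2]\cong E[2]$ to $(\zeta_3^{2},0)$ and $(\zeta_3,0)$. On the other hand $\operatorname{div}(1-x^{-1})=2(1,0)-2(0,0)$ on $E$, so the monodromy class $\eta$ of $\sqrt{1-x^{-1}}$ — through which $T_E$ is required to pass — is $\mathcal{O}_E((1,0)-(0,0))$, corresponding to $(1,0)\in E[2]$. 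Hence $\eta$ and the two dangerous characters are precisely the three distinct nonzero $2$-torsion points of $\Pic^{0}(E)$. But a closed $1$-dimensional subtorus $V/\Lambda'$ through the origin of the real $2$-torus $\widehat{H_1(E,\Z)}$ has $2$-torsion $\tfrac12\Lambda'/\Lambda'\cong\Z/2$, so it contains a single nonzero $2$-torsion point; as $T_E$ contains $\eta$, it contains neither dangerous character. Therefore no $\psi\in T_E$ is dangerous, and the theorem follows.

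The step needing the most care is the ``only if'' half of the middle step — especially the sub-case $h^0(\mathcal{O}_\Sigma(W+q))=2$ and the appeal to injectivity of $f^{*}$ — but the conceptual heart of the argument is the coincidence just exploited: the dangerous characters are exactly the two nonzero $2$-torsion points of $E$ other than $\eta$, so the circle $T_E$, pinned only to $\eta$, slips past both. This is why the curve $\Sigma=\{y^2=x^6-1\}$, the map $f$, and the square root $\sqrt{1-x^{-1}}$ have to be chosen as they are; had $\eta$ happened to be a dangerous character, the construction would fail.
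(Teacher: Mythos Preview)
Your proof is correct and takes a genuinely different route from the paper's. The paper introduces a second map $f' \colon \Sigma \to E'$ to the elliptic curve $y^2 = x^3 - 1$, shows that $f'_* f^*$ vanishes on $\Cl^0(E)$, and uses this to force $f'(q) = (1,0)$, hence $q = (\pm 1, 0)$; it then reaches a contradiction via the genus-$2$ divisor lemma after computing $f^*\Div(L_\chi)$ in the two $2$-torsion cases. You dispense with $f'$ entirely: your key observation is that when $h^0(f^*M) = 1$, the unique section must be pulled back from $E$, so the effective divisor $W + q$ is literally a fibre of $f$, forcing $M \cong \mathcal{O}_E(f(W)) = \mathcal{O}_E((1,0))$. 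This is a cleaner way to pin down $M$, and it reduces the theorem to checking that the two specific $2$-torsion classes $(1,0)-(\zeta_3,0)$ and $(0,0)-(\zeta_3,0)$ in $\Pic^0(E)$ are distinct from $\chi_2 \sim (1,0)-(0,0)$, which is immediate once one knows $(1,0)+(\zeta_3,0)+(\zeta_3^2,0)\sim 3(0,0)$. The paper's use of $f'$ is perhaps more structural (it reflects the isogeny splitting $\Jac(\Sigma)\sim E\times E'$), but your argument is shorter and stays entirely on $E$.

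The one place where you are a bit quick is the parenthetical ``valid for a ramified double cover.'' This is indeed true --- if $f^*L$ is trivial then $L$ is $2$-torsion via $f_* f^* = [2]$, and a nontrivial $2$-torsion $L$ would force the degree-$2$ map $f$ to factor through the \'etale double cover defined by $L$, making $f$ itself \'etale, contradicting Riemann--Hurwitz for a genus-$2$ double cover of genus $1$ --- but it deserves a sentence, since the paper only proves (and only needs) the weaker statement that $f^*$ is injective on the circle $T_E$.
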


	By Proposition \ref{prop:bloch} and the correspondence between spin structures and theta characteristics, Theorem \ref{thm:no_holo_sections} implies Theorem \ref{thm:main}. It remains to prove Theorem \ref{thm:no_holo_sections}.
	
	\section{Standard lemmas about genus 2 curves} \label{sec:genus_2}
	
	Let $X$ be the curve $y^2 = h(x)$ for some monic polynomial $h$. For most of the results in this section, we will assume $X$ has genus 2, which is the case if and only if $h$ has degree 5 or 6.
	
	In this section, let $K_X$ denote the canonical divisor on $X$ (well defined up to linear equivalence), as opposed to the canonical line bundle on $X$. Let $p \colon X \to \mathbb{P}^1 = \C \cup \{\infty\}$ be the projection $p(x,y) = x$.
	
	\begin{lem} \label{lem:deg_2_divisor_lemma}
		Let $X$ have genus $2$. Let $D_1,D_2$ be distinct effective divisors on $X$ of degree $2$. Then $D_1 \sim D_2$ if and only if $D_1,D_2$ are both canonical, if and only if each $D_i$ is the sum of the two points (counting multiplicity) in some fiber of $p$.
	\end{lem}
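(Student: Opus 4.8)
The plan is to derive everything from the standard structure of the canonical system on a genus $2$ curve: namely that $\dim |K_X| = 1$ and that this pencil is exactly the family of fibers of the hyperelliptic map $p$.

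First I would settle the equivalence ``$D_1 \sim D_2 \iff D_1,D_2$ are both canonical'' using Riemann--Roch. For an effective divisor $D$ of degree $2$ on $X$, Riemann--Roch reads $h^0(D) - h^0(K_X - D) = 2 - g + 1 = 1$; since $\deg(K_X - D) = (2g-2) - 2 = 0$, we get $h^0(K_X - D) \le 1$, with equality iff $K_X - D \sim 0$. Hence $h^0(D) \le 2$, with $h^0(D) = 2$ precisely when $D \sim K_X$. Now if $D_1 \neq D_2$ are effective of degree $2$ with $D_1 \sim D_2$, then $D_1$ moves in a pencil, so $h^0(D_1) \ge 2$; this forces $h^0(D_1) = 2$ and $D_1 \sim K_X$, and symmetrically $D_2 \sim K_X$. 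The reverse implication is immediate. (This is the only point where the hypothesis $D_1 \neq D_2$ is used.)

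Next I would identify the effective canonical divisors with the fibers of $p$. Each fiber $p^*(a)$, $a \in \P^1$, is effective of degree $2$, and all the $p^*(a)$ are linearly equivalent since the points of $\P^1$ are; so it suffices to exhibit one canonical fiber. For that I would compute the divisor of the holomorphic differential $\omega = dx/y$ --- one of the two differentials $dx/y$, $x\,dx/y$ spanning $H^0(X,K_X)$ --- via Riemann--Hurwitz: the ramification divisor of $p$ has degree $6$, supported at the roots of $h$ (and, when $\deg h$ is odd, at the point over $\infty$), and a short local computation gives $\Div(\omega) = p^*(\infty)$. Thus $K_X \sim p^*(\infty) \sim p^*(a)$ for every $a$, so every fiber of $p$ is canonical --- and a fiber is by definition the sum of the two points of $X$, counted with multiplicity, lying over $a$. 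Conversely, $a \mapsto p^*(a)$ defines a non-constant morphism $\P^1 \to |K_X| \cong \P^1$, necessarily surjective, so every effective canonical divisor equals some $p^*(a)$. Together these two steps give the second equivalence, and the lemma follows.

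The only step needing genuine care is the computation $\Div(dx/y) = p^*(\infty)$: one must treat the case $\deg h = 6$ (two points over $\infty$, at each of which $y$ has a triple pole and $p$ is unramified) separately from $\deg h = 5$ (a single point over $\infty$, at which $p$ ramifies); everything else is formal.
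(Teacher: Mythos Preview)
Your argument is correct. The first equivalence is handled exactly as in the paper, via Riemann--Roch and the observation that a degree-zero divisor with a section is principal.

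For the second equivalence you take a genuinely different route. To show that fibers are canonical, the paper does not compute $\Div(dx/y)$; instead it observes that any two fibers $p^*(a),p^*(b)$ are linearly equivalent (since $(a)\sim(b)$ on $\P^1$) and then feeds this back into the first equivalence: two \emph{distinct} linearly equivalent effective degree-$2$ divisors are forced to be canonical. This avoids any local computation and is the part isolated in the paper's Remark as holding in any genus. For the converse direction, the paper writes down $H^0(\mathcal{O}(p^*(\infty)))=\langle 1,x\rangle$ explicitly and concludes that any effective $D\sim p^*(\infty)$ is the divisor of a function of $x$, hence a pullback $p^*(a)$. Your argument instead packages this as the surjectivity of the non-constant morphism $\P^1\to|K_X|\cong\P^1$, $a\mapsto p^*(a)$. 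Both directions are perfectly valid; the paper's version is slightly more self-contained (no differential computation, no appeal to morphisms between projective lines), while yours makes the role of the canonical pencil more transparent and, as you note, isolates the only genuinely computational step in $\Div(dx/y)=p^*(\infty)$.
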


	\begin{proof}
		Suppose $D_1 \sim D_2$. Then there is a non-constant function in $H^0(\mathcal{O}(D_1))$, so by Riemann--Roch,
		\begin{align*}
			2 \leq \ell(D_1)
			= 1 + \ell(K_X-D_1).
		\end{align*}
		Since $X$ has genus $2$, the canonical divisor has degree $2$, so $K_X-D_1$ has degree $0$. However, the above inequality says that $\mathcal{O}(K_X-D_1)$ has a nonzero section. This forces $D_1 \sim K_X$. Similarly, $D_2 \sim K_X$. Thus the first equivalence holds.
		
		To prove the second equivalence, it is enough (now that we know the first equivalence) to show that an effective divisor $D$ on $X$ is canonical if and only if $D = p^*(a)$ for some $a \in \mathbb{P}^1$. Note first that $p^*(a) \sim p^*(b)$ for all $a,b \in \mathbb{P}^1$, because $(a) \sim (b)$ as divisors on $\mathbb{P}^1$. Thus by the first equivalence, $p^*(a) \sim K_X$ for each $a \in \mathbb{P}^1$. It remains to show that if $D$ is canonical, then $D = p^*(a)$ for some $a \in \mathbb{P}^1$. Assuming $D$ is canonical, $D \sim K_X \sim p^*(\infty)$. This means that $D-p^*(\infty)$ is the divisor of a nonzero function in $H^0(\mathcal{O}(p^*(\infty)))$. This space of functions contains $1$ and $x$, and has dimension 2 by Riemann--Roch. Thus $D-p^*(\infty)$ is the divisor of a function of $x$. Hence $D = p^*D'$ for some divisor $D'$ on $\mathbb{P}^1$. Since $D$ is effective of degree 2, it follows that $D'$ is effective of degree 1, so $D' = (a)$ for some $a \in \mathbb{P}^1$, and $D = p^*(a)$ is of the desired form.
	\end{proof}

	\begin{rem} \label{rem:any_genus}
		Part of the statement of Lemma \ref{lem:deg_2_divisor_lemma} is that if each $D_i$ is the sum of the two points (counting multiplicity) in some fiber of $p$, then $D_1 \sim D_2$. The proof of this part does not require that $X$ has genus $2$.
	\end{rem}

	\begin{cor} \label{cor:2(branch_pt)}
		Let $w_1,w_2 \in X$ be branch points of $p$. Then $2(w_1) \sim 2(w_2)$ as divisors on $X$. In addition, if $X$ has genus 2, then $2(w_1)$ is canonical.
	\end{cor}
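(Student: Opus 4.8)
The plan is to deduce both assertions from Lemma \ref{lem:deg_2_divisor_lemma} (and the genus-independent part noted in Remark \ref{rem:any_genus}). First observe that if $w \in X$ is a branch point of the projection $p \colon X \to \mathbb{P}^1$, say $p(w) = a$, then the fiber $p^*(a)$ equals $2(w)$ as a divisor: the fiber over a branch point consists of a single point with multiplicity $2$. (Concretely, for $X$ given by $y^2 = h(x)$, the branch points of $p$ are the points where $y = 0$, i.e. the roots of $h$, together with a point at infinity when $\deg h$ is odd; in each case the local coordinate is $y$ — or a suitable uniformizer at infinity — and $x - a$ vanishes to order $2$.) Thus $2(w_1) = p^*(p(w_1))$ and $2(w_2) = p^*(p(w_2))$ are both full fibers of $p$.

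For the first assertion, I would simply invoke the genus-independent direction of Lemma \ref{lem:deg_2_divisor_lemma} recorded in Remark \ref{rem:any_genus}: since $2(w_1)$ and $2(w_2)$ are each the sum of the two points (with multiplicity) in a fiber of $p$, they are linearly equivalent. Alternatively, and even more directly, $p^*(p(w_1)) \sim p^*(p(w_2))$ because $(p(w_1)) \sim (p(w_2))$ as degree-$1$ divisors on $\mathbb{P}^1$, pulled back along $p$. Either route gives $2(w_1) \sim 2(w_2)$ with no hypothesis on the genus.

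For the second assertion, assume now that $X$ has genus $2$. Then by Lemma \ref{lem:deg_2_divisor_lemma} (the second equivalence, applied with $D = 2(w_1) = p^*(p(w_1))$, an effective divisor of degree $2$ that is a fiber of $p$), $D$ is canonical. So $2(w_1) \sim K_X$, as claimed.

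I do not anticipate a genuine obstacle here — the corollary is a straightforward packaging of the lemma. The only point requiring a moment's care is the elementary geometric fact that the scheme-theoretic fiber of $p$ over a branch point is $2(w)$ rather than two distinct reduced points; this is where one uses the precise form $y^2 = h(x)$ of the model and checks the ramification at the finite branch points (and, if $\deg h = 5$, at infinity) by exhibiting a local uniformizer.
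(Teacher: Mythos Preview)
Your proof is correct and follows essentially the same approach as the paper: both observe that $2(w) = p^*(p(w))$ for a branch point $w$, then invoke Remark~\ref{rem:any_genus} for the first assertion and Lemma~\ref{lem:deg_2_divisor_lemma} for the second. Your write-up simply unpacks a few more details (the local uniformizer, the alternative pullback argument) than the paper's one-line citation.
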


	\begin{proof}
		This follows from Lemma \ref{lem:deg_2_divisor_lemma}, Remark \ref{rem:any_genus}, and the fact that if $w$ is a branch point, then $2(w) = p^*(p(w))$ as divisors on $X$.
	\end{proof}

	We note, for concreteness, that $(a,0) \in X$ is a branch point of $p$ for each root $a$ of $h$. When $\deg h$ is even, there are no other branch points. When $\deg h$ is odd, $p^{-1}(\infty)$ consists of one point, and this point is the only other branch point.
	
	\section{Proof of spectral gap} \label{sec:gap_pf}
	
	As stated at the end of Section \ref{sec:construction}, in order to prove Theorem \ref{thm:main}, it suffices to prove Theorem \ref{thm:no_holo_sections}. By definition, each element of $T$ is of the form $f^*\chi$ for some $\chi \in T_E$. The line bundle $L_{f^*\chi}$ on $\Sigma$ is the pullback via $f$ of the line bundle $L_{\chi}$ on $E$. Thus Theorem \ref{thm:no_holo_sections} is equivalent to the statement that $K_{\Sigma}^{1/2} \otimes f^*L_{\chi}$ has no nonzero holomorphic sections for all $\chi \in T_E$. We will prove this assuming four lemmas, and then afterward we will go back and prove the lemmas.
	
	Given a holomorphic line bundle $L$, we write $\Div(L)$ for its divisor (which is well-defined up to linear equivalence).
	
	\begin{lem} \label{lem:deg(L_chi)=0}
		Flat line bundles on compact Riemann surfaces always have degree zero.
	\end{lem}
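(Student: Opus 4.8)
The plan is to show that the first Chern class of a flat line bundle vanishes, exploiting that flatness kills curvature. Recall that on a compact Riemann surface $\Sigma$ the degree of a holomorphic line bundle $L$ is $\deg L = \int_\Sigma c_1(L)$, and that for \emph{any} connection $\nabla$ on $L$ the $2$-form $\tfrac{i}{2\pi}F_\nabla$ represents the image of $c_1(L)$ in $H^2_{\mathrm{dR}}(\Sigma;\mathbb{R})$: two connections on a line bundle differ by a global $1$-form, so their curvatures differ by an exact form, and for a Hermitian connection this is the standard Chern--Weil representative.

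First I would invoke the hypothesis: being flat, $L$ admits a connection $\nabla$ with $F_\nabla = 0$, so $c_1(L) = 0$ in $H^2_{\mathrm{dR}}(\Sigma;\mathbb{R})$. Since $H^2(\Sigma;\mathbb{Z}) \cong \mathbb{Z}$ is torsion-free, the integral Chern class injects into real cohomology, hence $c_1(L) = 0$ already in $H^2(\Sigma;\mathbb{Z})$, and therefore $\deg L = \int_\Sigma c_1(L) = 0$. In the case of interest the bundles $L_\chi$ have locally constant unitary transition functions, so the constant metric $h \equiv 1$ is compatible with the flat structure and its Chern connection has vanishing curvature; this makes the identity $\deg L_\chi = \tfrac{i}{2\pi}\int_\Sigma F = 0$ completely explicit.

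There is essentially no obstacle here; the statement is standard and is recorded only for completeness. If one prefers to avoid differential geometry, the same conclusion follows by comparing the exponential sequences of sheaves $0 \to \mathbb{Z} \to \underline{\mathbb{C}} \to \underline{\mathbb{C}}^{\times} \to 0$ and $0 \to \mathbb{Z} \to \mathcal{O}_\Sigma \to \mathcal{O}_\Sigma^{\times} \to 0$: the degree of a flat line bundle is the image of its class under the connecting map $H^1(\Sigma,\underline{\mathbb{C}}^{\times}) \to H^2(\Sigma;\mathbb{Z})$, and this map is zero because in the long exact sequence it is followed by the injection $H^2(\Sigma;\mathbb{Z}) = \mathbb{Z} \hookrightarrow H^2(\Sigma;\underline{\mathbb{C}}) = \mathbb{C}$. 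The only mild point to verify in either approach is the compatibility of the relevant Chern-class and connecting-map conventions, which is routine.
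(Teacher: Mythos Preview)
Your proof is correct but follows a different route from the paper.  The paper argues directly from the divisor definition of degree: pick a meromorphic section $g$ of the flat bundle, observe that $\tfrac{dg}{g}$ is a single-valued meromorphic $1$-form because the monodromy in numerator and denominator cancels, and then apply the argument principle (the total residue over a compact surface vanishes) to conclude that $g$ has as many zeros as poles.  Your Chern--Weil and exponential-sequence arguments invoke more machinery but situate the result in its natural cohomological setting; the paper's approach is more elementary and self-contained, and ties directly to the divisor language used throughout Sections~\ref{sec:genus_2} and~\ref{sec:gap_pf}.  One tiny imprecision: for a non-Hermitian connection the form $\tfrac{i}{2\pi}F_\nabla$ is a priori $\mathbb{C}$-valued, so the class it represents lives in $H^2_{\mathrm{dR}}(\Sigma;\mathbb{C})$ rather than $H^2_{\mathrm{dR}}(\Sigma;\mathbb{R})$---but since $H^2(\Sigma;\mathbb{Z})\cong\mathbb{Z}$ injects into $\mathbb{C}$ as well, the conclusion is unaffected.
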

	
	\begin{lem} \label{lem:chi_2_divisor}
		Let $\chi_2 \in T_E$ be the unique nonzero $2$-torsion character. Then
		\begin{align} \label{eqn:Div(L_{chi_2})}
			\Div(L_{\chi_2})
			\sim (1,0) - (0,0)
		\end{align}
		on $E$, and
		\begin{align} \label{eqn:Div(f^*L_{chi_2})}
			\Div(f^*L_{\chi_2})
			\sim (1,0) - (-1,0)
		\end{align}
		on $\Sigma$.
	\end{lem}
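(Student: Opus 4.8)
The plan is to exhibit, for each of the two line bundles, an explicit nonzero meromorphic section and read off its divisor. Recall from Section~\ref{sec:bloch} that a meromorphic section of the flat line bundle $L_\chi$ is nothing but a multivalued meromorphic function with monodromy $\chi$, and that $\Div(L_\chi)$ denotes the linear equivalence class of the divisor of any such section; by Lemma~\ref{lem:deg(L_chi)=0} this class has degree $0$, consistent with both right-hand sides in the statement.

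First I would treat \eqref{eqn:Div(L_{chi_2})}. On $E\colon y^2 = x^4-x$, both $0$ and $1$ are roots of $x^4-x$, so $(0,0)$ and $(1,0)$ are branch points of the degree-$2$ projection $p_E\colon E\to\P^1$, $(x,y)\mapsto x$; hence $\Div(x) = 2(0,0) - p_E^*(\infty)$ and $\Div(x-1) = 2(1,0) - p_E^*(\infty)$, so the rational function $g = 1-x^{-1} = (x-1)/x$ has $\Div(g) = 2(1,0) - 2(0,0)$. Since all zeros and poles of $g$ have even order (the point noted parenthetically in Section~\ref{sec:construction}), the multivalued function $\sqrt g = \sqrt{1-x^{-1}}$ has monodromy well-defined on $H_1(E,\Z)$ — equal to $\chi_2$ by the definition of $T_E$ — and divisor $\tfrac12\Div(g) = (1,0) - (0,0)$. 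Thus $\sqrt g$ is a nonzero meromorphic section of $L_{\chi_2}$ with divisor $(1,0)-(0,0)$, which is \eqref{eqn:Div(L_{chi_2})}.

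Next, for \eqref{eqn:Div(f^*L_{chi_2})} I would pull back. We have $f^*L_{\chi_2} = L_{f^*\chi_2}$, and $f^*$ sends a multivalued function with monodromy $\chi_2$ to one with monodromy $f^*\chi_2$, so $f^*\sqrt{1-x^{-1}} = \sqrt{1-x^{-2}}$ is a meromorphic section of $f^*L_{\chi_2}$. Multiplying by the single-valued rational function $x/(x+1)$ on $\Sigma$ changes neither the monodromy character nor the line bundle, and
\begin{align*}
	\frac{x}{x+1}\,\sqrt{1-x^{-2}}
	&= \frac{x}{x+1}\,\sqrt{\frac{(x-1)(x+1)}{x^2}}
	= \sqrt{\frac{x-1}{x+1}}\,.
\end{align*}
On $\Sigma\colon y^2 = x^6-1$, both $1$ and $-1$ are roots of $x^6-1$, so $(1,0)$ and $(-1,0)$ are branch points of $p\colon\Sigma\to\P^1$; hence $\Div(x-1) = 2(1,0) - p^*(\infty)$ and $\Div(x+1) = 2(-1,0) - p^*(\infty)$, so $\Div\big((x-1)/(x+1)\big) = 2(1,0) - 2(-1,0)$ and therefore $\Div\big(\sqrt{(x-1)/(x+1)}\big) = (1,0) - (-1,0)$. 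This exhibits a nonzero meromorphic section of $f^*L_{\chi_2}$ with the asserted divisor, proving \eqref{eqn:Div(f^*L_{chi_2})}. As a cross-check one can instead pull the divisor back directly: $f^{-1}(1,0) = \{(1,0),(-1,0)\}$ and $f^{-1}(0,0) = \{(0,i),(0,-i)\}$ each consist of two distinct points, so $f$ is unramified over both and $f^*\big((1,0)-(0,0)\big) = (1,0)+(-1,0)-(0,i)-(0,-i)$; since $(0,i)+(0,-i) = p^*(0) \sim p^*(-1) = 2(-1,0)$ by Corollary~\ref{cor:2(branch_pt)} and Remark~\ref{rem:any_genus}, this is again linearly equivalent to $(1,0)-(-1,0)$.

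The argument is essentially bookkeeping, and I do not expect a serious obstacle; the points that require care are the monodromy accounting — that a multivalued meromorphic function with no nontrivial local branching has monodromy defined on $H_1$, and that pullback along $f$ together with multiplication by honest (single-valued) rational functions leaves the monodromy character unchanged — together with the fact that $\sqrt h$ has divisor $\tfrac12\Div h$ when $\Div h$ is even. The identification of the relevant points as branch points of the hyperelliptic projections is immediate from the defining equations.
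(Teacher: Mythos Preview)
Your proof is correct and follows essentially the same approach as the paper: use $\sqrt{1-x^{-1}}$ as an explicit meromorphic section of $L_{\chi_2}$ for \eqref{eqn:Div(L_{chi_2})}, then pull back for \eqref{eqn:Div(f^*L_{chi_2})}. Your ``cross-check'' via $f^*\big((1,0)-(0,0)\big) = (1,0)+(-1,0)-(0,i)-(0,-i)$ and $(0,i)+(0,-i) \sim 2(-1,0)$ is in fact exactly the paper's argument; your first route, multiplying the pulled-back section by $x/(x+1)$ to get $\sqrt{(x-1)/(x+1)}$ directly, is a pleasant variant that reaches the same conclusion.
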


	\begin{lem} \label{lem:injective}
		The map $T_E \to \Pic(\Sigma)$ by $\chi \mapsto f^*L_{\chi}$ is injective.
	\end{lem}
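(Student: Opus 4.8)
The plan is to exploit that $f$ has degree $2$ in order to reduce everything to a statement about $2$-torsion. Via the standard identification of $\widehat{H_1(E,\Z)}$ with $\Pic^0(E)$ sending a unitary character $\chi$ to the flat degree-$0$ line bundle $L_\chi$ (this is a bijection: a degree-$0$ holomorphic line bundle on a compact Riemann surface carries a unique flat unitary structure, so distinct characters give non-isomorphic holomorphic line bundles), and similarly for $\Sigma$, the homomorphism $\chi \mapsto f^*L_\chi$ becomes the pullback map $f^* \colon \Pic^0(E) \to \Pic^0(\Sigma)$. Thus it suffices to show that $f^*$ is injective on the subtorus $T_E \subseteq \Pic^0(E)$, i.e.\ that $\ker(f^*) \cap T_E$ is trivial.

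The key observation is that $\ker(f^*)$ is contained in $\Pic^0(E)[2]$. Since $f \colon \Sigma \to E$ is a finite degree-$2$ morphism of smooth projective curves, there is a norm homomorphism $\mathrm{Nm}_f \colon \Pic(\Sigma) \to \Pic(E)$ (induced on divisors by pushforward with multiplicities) satisfying $\mathrm{Nm}_f \circ f^* = [2]$. Hence $f^*L \cong \mathcal{O}_\Sigma$ forces $L^{\otimes 2} = \mathrm{Nm}_f(f^*L) = \mathrm{Nm}_f(\mathcal{O}_\Sigma) = \mathcal{O}_E$. (One could instead apply the projection formula $f_*(f^*L) \cong L \otimes f_*\mathcal{O}_\Sigma$ and take determinants of the rank-$2$ bundle $f_*\mathcal{O}_\Sigma$.)

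Now $T_E$ is a one-dimensional torus, so it meets $\Pic^0(E)[2]$ in exactly two points: the identity and the unique nonzero $2$-torsion point $\chi_2$ of $T_E$. Combined with the previous paragraph, $\ker(f^*) \cap T_E \subseteq \{1, \chi_2\}$, so it only remains to check that $\chi_2 \notin \ker(f^*)$. By Lemma~\ref{lem:chi_2_divisor}, $f^*L_{\chi_2} \cong \mathcal{O}_\Sigma\big((1,0)-(-1,0)\big)$; if this were trivial there would be a meromorphic function on $\Sigma$ with a single simple zero and a single simple pole, hence a degree-$1$ map $\Sigma \to \mathbb{P}^1$, contradicting that $\Sigma$ has genus $2$. (Consistently with the first step, Corollary~\ref{cor:2(branch_pt)} shows this class is $2$-torsion.) Therefore $\ker(f^*) \cap T_E$ is trivial and the lemma follows.

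The step I expect to be most delicate is the inclusion $\ker(f^*) \subseteq \Pic^0(E)[2]$: without it, $\ker(f^*) \cap T_E$ could a priori be a finite cyclic subgroup of $T_E$ of odd order, which need not contain $\chi_2$, and the argument would not close. The identity $\mathrm{Nm}_f \circ f^* = [\deg f]$ is precisely what prevents this, so I would state it — and the construction of $\mathrm{Nm}_f$ on divisor classes — carefully.
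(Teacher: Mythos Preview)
Your proof is correct and follows essentially the same two-step structure as the paper's: first reduce to $2$-torsion via the degree-$2$ nature of $f$ (the paper writes this as $f_*f^*L_\chi = L_{\chi^2}$ and then invokes the maximum principle, which amounts to your norm identity $\mathrm{Nm}_f \circ f^* = [2]$ together with the identification $\widehat{H_1(E,\Z)} \cong \Pic^0(E)$), then rule out $\chi_2$ using Lemma~\ref{lem:chi_2_divisor}. Your exclusion of $\chi_2$ via the nonexistence of a degree-$1$ map $\Sigma \to \mathbb{P}^1$ is slightly more self-contained than the paper's, which instead appeals to Lemma~\ref{lem:deg_2_divisor_lemma}.
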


	Let $E'$ be the elliptic curve $y^2 = x^3 - 1$, and let $f' \colon \Sigma \to E'$ be the map
	\begin{align*}
		f'(x,y) = (x^2,y).
	\end{align*}
	
	\begin{lem} \label{lem:zero_map}
		The map $f_*' f^* \colon \Cl^0(E) \to \Cl^0(E')$ is the zero map.
	\end{lem}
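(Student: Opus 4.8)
The plan is to compute the homomorphism $f_*'f^*$ on divisors directly, exploiting that $f$ and $f'$ both resolve the same degree-$4$ map $\Sigma\to\mathbb{P}^1$. Let $p_{\Sigma}\colon\Sigma\to\mathbb{P}^1$, $p_E\colon E\to\mathbb{P}^1$, and $p_{E'}\colon E'\to\mathbb{P}^1$ be the $x$-coordinate projections. The defining formulas for $f$ and $f'$ give $p_E\circ f=p_{E'}\circ f'$, since both send $(x,y)$ to $x^2$. Let $U\subseteq E$ be the complement of the finitely many points lying over $\{0,\infty\}\subseteq\mathbb{P}^1$. I would first check that $f$ is \'etale of degree $2$ over $U$: the only ramification of $f$ occurs at the fixed points of its deck involution $\tau(x,y)=(-x,-y)$, which are the two points of $\Sigma$ at infinity, and these lie over $\infty\in\mathbb{P}^1$. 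Hence, for $p=(u,v)\in U$ and a fixed square root $\sqrt u$ of $u$,
$$f^*(p)=\bigl(\sqrt u,\,v/\sqrt u\bigr)+\bigl(-\sqrt u,\,-v/\sqrt u\bigr)$$
(two distinct points of $\Sigma$, since $u\neq0$). Applying $f'(x,y)=(x^2,y)$ to each and summing,
$$f_*'f^*(p)=\bigl(u,\,v/\sqrt u\bigr)+\bigl(u,\,-v/\sqrt u\bigr)=p_{E'}^*\bigl(p_E(p)\bigr)$$
as divisors on $E'$, the last equality because $(v/\sqrt u)^2=v^2/u=u^3-1$, so these two points form exactly the fiber of $p_{E'}$ over $u$.

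Now I would conclude. For $p,q\in U$,
$$f_*'f^*\bigl([p]-[q]\bigr)=p_{E'}^*\bigl(p_E(p)-p_E(q)\bigr),$$
and $p_E(p)-p_E(q)$ is a degree-$0$ divisor on $\mathbb{P}^1$, hence principal; the pullback of a principal divisor under a finite morphism is principal, so $f_*'f^*([p]-[q])=0$ in $\Cl^0(E')$. It remains to note that classes of this form generate $\Cl^0(E)$: given $\xi\in\Cl^0(E)$, the set of $q\in E$ for which the unique point $p$ with $[p]=\xi+[q]$ fails to lie in $U$ is finite, so one may choose $q\in U$ with also $p\in U$, giving $\xi=[p]-[q]$. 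Since $f_*'f^*$ is a group homomorphism, it is therefore identically zero on $\Cl^0(E)$. (One can package this as $f_*'f^*=p_{E'}^*\circ p_{E*}$ on $\Cl^0(E)$, which vanishes because $p_{E*}$ lands in $\Cl^0(\mathbb{P}^1)=0$.)

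I do not expect a serious obstacle here: the substance is the one-line fiber computation, and the only things to be careful about are verifying that $f$ is unramified over $U$ (so $f^*(p)$ really is the reduced fiber and the multiplicities are as claimed) and the routine point that classes $[p]-[q]$ with $p,q$ avoiding a fixed finite set still generate $\Cl^0(E)$.
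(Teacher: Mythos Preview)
Your proof is correct and follows essentially the same approach as the paper: both compute $f'_*f^*(p)$ on a generic point $p=(u,v)\in E$ and find that it equals the full fiber $p_{E'}^*(u)$, then use that all such fibers are linearly equivalent (equivalently, that $\Cl^0(\mathbb{P}^1)=0$). The paper packages this as ``the image of $f'_*f^*$ on $\Cl(E)$ is $2\mathbb{Z}\cdot(1,0)$,'' while you phrase it as a factorization $f'_*f^*=p_{E'}^*\circ p_{E*}$ through $\Cl^0(\mathbb{P}^1)$; these are the same argument, and your extra care about ramification and about generators of $\Cl^0(E)$ is sound but not strictly needed.
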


	\begin{proof}[Proof of Theorem \ref{thm:no_holo_sections} assuming Lemmas \ref{lem:deg(L_chi)=0} through \ref{lem:zero_map}]
		Suppose for a contradiction that $K_{\Sigma}^{1/2} \otimes f^*L_{\chi}$ has a nonzero holomorphic section for some $\chi \in T_E$. This means that the divisor
		\begin{align*}
			\Div(K_{\Sigma}^{1/2} \otimes f^*L_{\chi})
			\sim \frac{1}{2}K_{\Sigma} + f^*\Div(L_{\chi})
		\end{align*}
		is effective. By Lemma \ref{lem:deg(L_chi)=0}, this divisor has degree 1, so effectivity means that it is linearly equivalent to $(q)$ for some $q \in \Sigma$. In symbols,
		\begin{align} \label{eqn:effective}
			\frac{1}{2}K_{\Sigma} + f^*\Div(L_{\chi})
			\sim (q).
		\end{align}
		Pushing this equivalence forward along $f'$ and using Lemma \ref{lem:zero_map} gives
		\begin{align*}
			(f'(q))
			\sim f'_*\Big(\frac{1}{2}K_{\Sigma}\Big)
			= 2(\zeta_3,0) - (1,0)
			\sim (1,0),
		\end{align*}
		where the final $\sim$ is by Corollary \ref{cor:2(branch_pt)}.
		Since $E'$ has genus 1, we have $\Cl^1(E') = E'$, so $f'(q) = (1,0)$, and hence $q = (\pm 1,0)$ for some choice of sign.
		Plugging this into \eqref{eqn:effective},
		\begin{align} \label{eqn:f^*Div(L_chi)}
			f^* \Div(L_{\chi})
			\sim (q) - \frac{1}{2}K_{\Sigma}
			= (\pm 1,0) - \frac{1}{2}K_{\Sigma}
			= (\pm 1,0) + (1,0) - (\zeta_6,0) - (-\zeta_6,0).
		\end{align}
		By Corollary \ref{cor:2(branch_pt)}, the right hand side is 2-torsion, so $f^*L_{\chi}$ is 2-torsion in $\Pic(\Sigma)$.
		It follows from Lemma~\ref{lem:injective} that $\chi$ is 2-torsion, so either $\chi = 1$ or $\chi = \chi_2$ is the unique nonzero 2-torsion element in $T_E$.
		Thus by Lemma~\ref{lem:chi_2_divisor},
		\begin{align*}
			f^*\Div(L_{\chi})
			\sim
			\begin{cases}
				(1,0) - (-1,0) &\text{if } \chi = \chi_2, \\
				0 &\text{if } \chi = 1.
			\end{cases}
		\end{align*}
		Plugging this into \eqref{eqn:f^*Div(L_chi)}, we get
		\begin{align*}
			(\pm 1,0) + (\pm 1,0) \sim (\zeta_6,0) + (-\zeta_6,0)
		\end{align*}
		for some choice of signs. This contradicts Lemma \ref{lem:deg_2_divisor_lemma}, because the two points $(\zeta_6,0)$ and $(-\zeta_6,0)$ on the right hand side lie in different fibers of $p$.
	\end{proof}

	We now prove Lemmas \ref{lem:deg(L_chi)=0} through \ref{lem:zero_map}.

	\begin{proof}[Proof of Lemma \ref{lem:deg(L_chi)=0}]
		Let $g$ be a meromorphic section of a flat line bundle. Then $\frac{dg}{g}$ is an honest meromorphic 1-form, because the monodromy in the numerator and denominator cancel. Triangulating the surface and integrating $\frac{dg}{g}$ over each triangle, one finds by the argument principle that $g$ has the same number of zeros as poles. Thus the divisor associated to $g$ has degree zero, and hence the line bundle has degree $0$.
	\end{proof}

	\begin{proof}[Proof of Lemma \ref{lem:chi_2_divisor}]
		By construction, $\sqrt{1-x^{-1}}$ is a meromorphic section of $L_{\chi_2}$, and it has divisor $(1,0) - (0,0)$, so \eqref{eqn:Div(L_{chi_2})} holds. Taking preimages under $f$,
		\begin{align} \label{eqn:Div(f*L_chi_2)}
			\Div(f^*L_{\chi_2})
			\sim f^*[(1,0) - (0,0)]
			= (1,0) + (-1,0) - (0,i) - (0,-i).
		\end{align}
		By Lemma \ref{lem:deg_2_divisor_lemma}, we see that $(0,i) + (0,-i) \sim 2(-1,0)$. Inserting this above yields \eqref{eqn:Div(f^*L_{chi_2})}.
	\end{proof}

	\begin{proof}[Proof of Lemma \ref{lem:injective}]
		Let $\chi$ be in the kernel of the given map $T_E \to \Pic(\Sigma)$, so $f^*L_{\chi}$ is trivial. Then $f_*f^*L_{\chi} = L_{\chi}^{\otimes 2} = L_{\chi^2}$ is trivial. This means that $L_{\chi^2}$ has a nonvanishing holomorphic section. But any holomorphic section is constant by the maximum principle (note the absolute value of a section of $L_{\chi^2}$ is well-defined because $\chi^2$ is a unitary character). Constants have no monodromy, so it follows that $\chi^2 = 1$. Therefore either $\chi = 1$ and we are done, or $\chi = \chi_2$ is the unique nonzero 2-torsion element of $T_E$. But $\chi$ cannot be $\chi_2$, because by Lemma~\ref{lem:deg_2_divisor_lemma} and \eqref{eqn:Div(f*L_chi_2)} in the proof of Lemma~\ref{lem:chi_2_divisor}, $\Div(f^*L_{\chi_2}) \not\sim 0$, contradicting that $\chi$ is in the kernel of $T_E \to \Pic(\Sigma)$.
	\end{proof}

	\begin{proof}[Proof of Lemma \ref{lem:zero_map}]
		We will show that the image of $f_*'f^*$ on the full class group $\Cl(E)$ is $2\mathbb{Z}(1,0) \subseteq \Cl(E')$. This implies the lemma because the degree zero part of $2\mathbb{Z}(1,0)$ is zero.
		
		Let $(a,b) \in E$, viewed as a divisor. Then
		\begin{align*}
			f'_* f^*(a,b)
			= f'_*\Big[\Big(\sqrt{a}, \frac{b}{\sqrt{a}}\Big) + \Big(-\sqrt{a}, -\frac{b}{\sqrt{a}}\Big)\Big]
			= \Big(a, \frac{b}{\sqrt{a}}\Big) + \Big(a,-\frac{b}{\sqrt{a}}\Big)
			\sim 2(1,0),
		\end{align*}
		where the last $\sim$ is by Remark \ref{rem:any_genus}.
		By definition, $\Cl(E)$ is generated by points in $E$, so the image of $\Cl(E)$ under $f'_* f^*$ is generated by $2(1,0)$. Thus the image is $2\mathbb{Z}(1,0)$, as desired.
	\end{proof}

\section{An explicit uniformization}\label{sec:uniformization}
	
	Let $\Sigma$ be the genus 2 curve $y^2 = x^6-1$. In this section we show that $\Sigma$ is arithmetic:
	
	\begin{prop}
		Let $q$ be the indefinite quadratic form $x_1^2 + x_2^2 - 6x_3^2$. Then $\Sigma$ is biholomorphic to $\Gamma \backslash \mathbb{H}^2$, where $\Gamma < \PSL_2(\R)$ is commensurable with $\SO_q(\Z)^+$ inside $\SO_q(\R)^+ \simeq \SO(2,1)^+ \simeq \PSL_2(\R)$.
	\end{prop}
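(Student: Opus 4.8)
The plan is to realize $\Sigma$ as a normal orbifold cover of a $(2,4,6)$-triangle orbifold, and then to use the arithmeticity of the $(2,4,6)$-triangle group together with an identification of its invariant quaternion algebra with the one over $\mathbb{Q}$ ramified exactly at $\{2,3\}$ --- which is also the even Clifford algebra of $q = x_1^2 + x_2^2 - 6x_3^2$.

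First I would compute $\Aut(\Sigma)$ and the quotient orbifold. The curve carries the order-$6$ automorphism $\zeta\colon(x,y)\mapsto(\zeta_6 x, y)$, the hyperelliptic involution $h\colon(x,y)\mapsto(x,-y)$, and the involution $\iota\colon(x,y)\mapsto(x^{-1}, iyx^{-3})$ (note $(iyx^{-3})^2 = x^{-6}-1 = (x^{-1})^6 - 1$). Since $h$ is central and $\Sigma/\langle h\rangle = \mathbb{P}^1$ via $p\colon(x,y)\mapsto x$, the reduced group $\overline{\Aut}(\Sigma) = \Aut(\Sigma)/\langle h\rangle$ is precisely the subgroup of $\mathrm{PGL}_2(\mathbb{C})$ preserving the branch locus $\{x: x^6=1\}$ of $p$; these being six concyclic points in convex position, their M\"obius stabilizer is dihedral, and it is already realized by $\langle\bar\zeta,\bar\iota\rangle\cong D_6$, so $|\Aut(\Sigma)|=24$. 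Computing point-stabilizers then shows: the six branch points $(\zeta_6^k,0)$ form one orbit with cyclic stabilizers of order $4$; the four points $(0,\pm i)$ together with the two points of $p^{-1}(\infty)$ form one orbit with stabilizers of order $6$; and there is one further non-free orbit, necessarily of size $12$. Since also $\chi^{\mathrm{orb}}(\Sigma/\Aut(\Sigma)) = \chi(\Sigma)/24 = -1/12$, this forces $\Sigma/\Aut(\Sigma) = S^2(2,4,6)$. Hence $\pi_1(\Sigma)$ is normal of index $24$ in the $(2,4,6)$-triangle group $\Delta := \pi_1^{\mathrm{orb}}(S^2(2,4,6)) < \PSL_2(\R)$, and $\Sigma\cong\pi_1(\Sigma)\backslash\mathbb{H}^2$ as Riemann surfaces.

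Next I would pin down $\Delta$ arithmetically and match it with $q$. By Takeuchi's classification of arithmetic triangle groups --- or directly, since $\cos\tfrac{2\pi}{2},\cos\tfrac{2\pi}{4},\cos\tfrac{2\pi}{6}\in\mathbb{Q}$ forces the invariant trace field to be $\mathbb{Q}$ --- the group $\Delta(2,4,6)$ is arithmetic and commensurable with $\mathcal{O}^1/\{\pm1\}$ for a maximal order $\mathcal{O}$ in the quaternion algebra $B/\mathbb{Q}$ ramified exactly at $\{2,3\}$; the area identity $\mathrm{Area}(S^2(2,4,6)) = \pi/6 = \tfrac14\cdot\tfrac{\pi}{3}(2-1)(3-1)$ records that $\Delta$ is in fact the Atkin--Lehner normalizer of $\mathcal{O}^1/\{\pm1\}$. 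On the other side, the even Clifford algebra of $q = \langle 1,1,-6\rangle$ is $C_0(q) = \left(\tfrac{-1,6}{\mathbb{Q}}\right) = \left(\tfrac{-1,2}{\mathbb{Q}}\right)\otimes\left(\tfrac{-1,3}{\mathbb{Q}}\right)$, and a short Hilbert-symbol check --- the first factor splits everywhere, the second ramifies precisely at $2$ and $3$ --- identifies $C_0(q)$ with $B$. Since $\SO_q \cong \mathrm{PGL}_1(C_0(q))$ as algebraic groups, we get $\SO_q(\R)^+\simeq\PSL_2(\R)$ and $\SO_q(\Z)^+$ is commensurable with $\mathcal{O}^1/\{\pm1\}$, hence with $\Delta$, hence with its finite-index subgroup $\pi_1(\Sigma)$. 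This is the assertion.

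The main obstacle is not the arithmeticity of $\Delta(2,4,6)$, which is classical, but the explicit bookkeeping needed to arrive at the precise form $q$: one must correctly compute $\Aut(\Sigma)$ and the cone data of $\Sigma/\Aut(\Sigma)$, and then carry out the Clifford-algebra and Hilbert-symbol computations pinning the invariant quaternion algebra to the one ramified at $\{2,3\}$, rather than merely asserting ``some arithmetic Fuchsian group.'' A more self-contained variant would bypass Takeuchi by writing down reflection generators realizing the $(2,4,6)$-triangle group directly inside the orthogonal group of the lattice $(\mathbb{Z}^3, q)$, at the cost of an explicit matrix computation.
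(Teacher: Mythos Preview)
Your argument is correct and reaches the same conclusion as the paper, but by a genuinely different route. The paper does \emph{not} compute $\Aut(\Sigma)$ or use the $(2,4,6)$ triangle group; instead it uses the degree-$12$ Galois cover $(x,y)\mapsto x^6$ from $\Sigma$ to $\mathbb{P}^1$, reads off ramification indices $2,6,6$ over $1,0,\infty$, and thereby exhibits $\Gamma$ as a finite-index subgroup of the $(2,6,6)$ triangle group. It then writes down the standard reflection representation of $\Delta(2,6,6)$ on a signature-$(2,1)$ space, rescales the basis so that the reflections preserve an integral lattice, and diagonalizes the Gram matrix over $\mathbb{Q}$ explicitly to arrive at $x_1^2+x_2^2-6x_3^2$. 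This is exactly the ``more self-contained variant'' you sketch in your last sentence. Your approach, by contrast, passes through the full automorphism group (so $(2,4,6)$ rather than $(2,6,6)$, an index-$2$ overgroup), invokes Takeuchi to identify the invariant quaternion algebra as the one over $\mathbb{Q}$ ramified at $\{2,3\}$, and then matches this with the even Clifford algebra of $q$ via Hilbert symbols. The paper's method is more elementary and self-contained (no classification results, no Clifford-algebra formalism); yours explains \emph{why} the specific form $q$ appears, namely because its local invariants match those of $B$.

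Two small slips worth cleaning up: the displayed equality $\left(\tfrac{-1,6}{\mathbb{Q}}\right)=\left(\tfrac{-1,2}{\mathbb{Q}}\right)\otimes\left(\tfrac{-1,3}{\mathbb{Q}}\right)$ is only true in the Brauer group, not as algebras (the right side is $16$-dimensional); and ``the four points $(0,\pm i)$'' should read ``the two points $(0,\pm i)$''. Also, the invariant trace field of $\Delta(p,q,r)$ is generated by the $\cos(2\pi/\ell)$ \emph{together with} $\cos(\pi/p)\cos(\pi/q)\cos(\pi/r)$; here the latter vanishes since $p=2$, so your conclusion stands, but the stated justification is incomplete.
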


	\begin{proof}
		Consider the map $\Sigma \to \P^1 = \C \cup \{\infty\}$ given by $(x,y) \mapsto x^6$. This is a Galois branched cover, in the sense that $\Aut(\Sigma/\P^1)$ acts transitively on the fibers. In particular, the ramification index is constant on each fiber. There is ramification only over $0,1,\infty \in \P^1$. The ramification index over $1 \in \P^1$ is $2$, and over $0,\infty \in \P^1$ is $6$.
		
		Conformally map the upper half-plane to a hyperbolic triangle with angles $\frac{\pi}{2},\frac{\pi}{6},\frac{\pi}{6}$ such that $1,0,\infty$ map to the vertices with angles $\frac{\pi}{2},\frac{\pi}{6},\frac{\pi}{6}$, in that order. Give the upper half-plane the induced hyperbolic metric. By the Schwarz reflection principle, this metric extends to the lower half-plane, giving a hyperbolic cone metric on $\P^1$ with cone angles $\frac{2\pi}{2}, \frac{2\pi}{6}, \frac{2\pi}{6}$ at $1,0,\infty$. By our computation of ramification indices, this pulls back to a smooth hyperbolic metric on $\Sigma$ (which is compatible with the complex structure). Since $\Sigma \to \P^1$ has degree $12$, we conclude that $\Sigma$ is tiled by $24$ hyperbolic $(2,6,6)$-triangles, and hence that $\Sigma \simeq \Gamma \backslash \mathbb{H}^2$ with $\Gamma$ an index $24$ subgroup of the triangle group $\Delta(2,6,6)$.
		
		Now consider the quadratic space $V$ over $\mathbb{R}$ with basis vectors $e_1,e_2,e_3$ and bilinear form given by
		\begin{align*}
			&\langle e_1,e_1 \rangle
			= \langle e_2,e_2 \rangle
			= \langle e_3,e_3 \rangle
			= 1,
			\qquad
			\langle e_1,e_2 \rangle
			= -\cos\frac{\pi}{2} = 0,
			\qquad\\
			&\langle e_1,e_3 \rangle
			= \langle e_2,e_3 \rangle
			= -\cos\frac{\pi}{6}
			= -\frac{\sqrt{3}}{2}.
		\end{align*}
        {
        The corresponding Gram matrix is
        \begin{align*}
            (\langle e_i,e_j \rangle)
            = \begin{pmatrix}
                1 & 0 & -\frac{\sqrt{3}}{2} \\
                0 & 1 & -\frac{\sqrt{3}}{2} \\
                -\frac{\sqrt{3}}{2} & -\frac{\sqrt{3}}{2} & 1
            \end{pmatrix}.
        \end{align*}
        }
		This has signature $(2,1)$, so the hyperbolic plane can be identified with the space $\mathbb{P}(V)^-$ of negative definite lines in $V$ (this identification is an isomorphism of homogeneous spaces for $\SO_V(\mathbb{R})^+ \simeq \PSL_2(\R)$). Then the projective image of
		\begin{align*}
			\{v \in V : \langle v,e_i \rangle \text{ has the same sign for all } i\}
		\end{align*}
		lands inside $\mathbb{P}(V)^-$, and is a $(2,6,6)$-triangle. The group $\Delta(2,6,6)$ is the subgroup of $\O_V(\mathbb{R})$ generated by reflections across $e_i^{\perp}$; recall that for $e$ a unit vector, the reflection across $e^{\perp}$ is the isometry $v \mapsto v - 2\langle v,e \rangle e$.
		
		Set {$e_1' = 6 e_1$, $e_2' = 6 e_2$, and $e_3' = 2\sqrt{3}e_3$} (this renormalization is helpful to cancel {denominators and} factors of $\sqrt{3}$ in certain inner products).
        {
        Then
        \begin{align*}
            (\langle e_i',e_j' \rangle)
            =
            \begin{pmatrix}
                6 & 0 & 0 \\
                0 & 6 & 0 \\
                0 & 0 & 2\sqrt{3}
            \end{pmatrix}
            \begin{pmatrix}
                1 & 0 & -\frac{\sqrt{3}}{2} \\
                0 & 1 & -\frac{\sqrt{3}}{2} \\
                -\frac{\sqrt{3}}{2} & -\frac{\sqrt{3}}{2} & 1
            \end{pmatrix}
            \begin{pmatrix}
                6 & 0 & 0 \\
                0 & 6 & 0 \\
                0 & 0 & 2\sqrt{3}
            \end{pmatrix}
            =
            \begin{pmatrix}
                36 & 0 & -18 \\
                0 & 36 & -18 \\
                -18 & -18 & 12
            \end{pmatrix}.
        \end{align*}
        }
        Let $\Lambda$ be the lattice in $V$ generated by the $e_i'$. We have $\langle e_i',e_j' \rangle \in \mathbb{Z}$ for all $i,j$, so $\Lambda$ has the structure of a quadratic space defined over $\mathbb{Z}$. Moreover, the reflections across the $e_i^{\perp}$ all preserve $\Lambda$, so
		\begin{align*}
			\Delta(2,6,6) \subseteq \O_{\Lambda}(\mathbb{Z})
			\subseteq \O_{\Lambda}(\mathbb{R}) = \O_V(\mathbb{R})
		\end{align*}
		(the last equality is because $\Lambda_{\mathbb{R}} = V$). Since $\Delta(2,6,6)$ is cocompact in $\O_V(\mathbb{R})$, it must have finite index in $\O_{\Lambda}(\mathbb{Z})$, and hence $\Gamma$ has finite index in $\SO_{\Lambda}(\mathbb{Z})^+$.
		
		Diagonalizing
        {
        the Gram matrix $(\langle e_i', e_j' \rangle)$,
        }
        we find that $\Lambda_{\mathbb{Q}}$ is isomorphic
        to the quadratic space $\mathbb{Q}^3$ with diagonal form $q(x) = x_1^2 + x_2^2 - 6x_3^2$:
        {
        \begin{align*}
            \begin{pmatrix}
                36 & 0 & -18 \\
                0 & 36 & -18 \\
                -18 & -18 & 12
            \end{pmatrix}
            =
            \begin{pmatrix}
                6 & 0 & -3 \\
                0 & -30 & 3 \\
                0 & 12 & -1
            \end{pmatrix}^T
            \begin{pmatrix}
                1 & 0 & 0 \\
                0 & 1 & 0 \\
                0 & 0 & -6
            \end{pmatrix}
            \begin{pmatrix}
                6 & 0 & -3 \\
                0 & -30 & 3 \\
                0 & 12 & -1
            \end{pmatrix}.
        \end{align*}
        }Thus $\SO_{\Lambda}(\Z)^+$ is commensurable with $\SO_q(\Z)^+$, and hence $\Gamma$ is commensurable with $\SO_q(\Z)^+$, as desired.
	\end{proof}

\appendix

\section{Embedding a disc of radius \texorpdfstring{$0.13$}{zot} into any hyperbolic surface}\label{app}
In this section, we provide a simple proof for the convenience of the reader of the fact that one can embed a hyperbolic disc of radius $0.13$ isometrically into any hyperbolic surface.

\begin{lem}
     On any closed hyperbolic surface, there exists a point whose injectivity radius is at least $0.13$.
 \end{lem}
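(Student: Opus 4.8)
The plan is to produce a thick point by an area argument rather than by studying individual geodesics. Write $\Sigma = \Gamma\backslash\mathbb{H}^2$. By Gauss--Bonnet, $\operatorname{Area}(\Sigma) = 2\pi|\chi(\Sigma)| = 4\pi(g-1) \geq 4\pi$. On the other hand, the \emph{thin part} $\Sigma_{<r} = \{p \in \Sigma : \inj_p(\Sigma) < r\}$, for a suitable small $r$, can be shown to have area bounded by $C(g-1)$ for a constant $C$ strictly smaller than $4\pi$. Once this is established, $\Sigma_{<r} \neq \Sigma$, so some point has injectivity radius at least $r$, and the constants will be arranged so that one may take $r \geq 0.13$.

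To bound the area of the thin part, I would argue as follows. If $\inj_p(\Sigma) = \rho$ is small, then lifting to $\tilde p \in \mathbb{H}^2$ there is a primitive hyperbolic $\gamma \in \Gamma$ with $d(\tilde p, \gamma\tilde p) = 2\rho$; its axis descends to a closed geodesic $c$ with $\ell(c) \leq 2\rho$, and the Saccheri quadrilateral formula $\sinh\rho = \cosh(d(p,c))\sinh(\ell(c)/2)$ forces $p$ into a narrow embedded collar around $c$. For $r < \operatorname{arcsinh}(1)$, every closed geodesic of length $< 2r$ is simple, and any two such are disjoint, so there are at most $3g-3$ of them, their collars (of width $w$ with $\sinh w = 1/\sinh(\ell(c)/2)$) are pairwise disjoint by the collar lemma, and each collar has area $2\ell(c)/\sinh(\ell(c)/2) < 4$. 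Hence $\operatorname{Area}(\Sigma_{<r}) < 4(3g-3) = 12(g-1) < 4\pi(g-1) = \operatorname{Area}(\Sigma)$, as required. Note that this already yields a point with $\inj \geq \operatorname{arcsinh}(1) \approx 0.88$, so the bound $0.13$ is far from sharp; it is presumably the value that falls out of a cruder but fully self-contained version of these trigonometric estimates.

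The main obstacle is the input that short closed geodesics are simple and pairwise disjoint with definite collars --- a two-dimensional Margulis-lemma phenomenon. If one is content to invoke the collar lemma, this is immediate. To keep the appendix self-contained, I would instead prove a crude explicit version directly: using hyperbolic trigonometry (or a Jørgensen-type inequality), if two hyperbolic isometries whose axes cross, or lie very close together, each move some common point by less than roughly $0.26$, then the group they generate fails to be discrete; this simultaneously handles the simplicity of a short geodesic (apply it to $\gamma$ and a conjugate) and the disjointness of two short geodesics. Everything else --- Gauss--Bonnet, the bound $3g-3$ on the number of disjoint simple closed geodesics, and the collar area computation --- is routine. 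I would also remark that a naive ball-packing/area count with no geometric input on short geodesics cannot work, since it fails to exclude configurations such as a hexagonal tiling, where disjoint embedded balls exactly fill the surface.
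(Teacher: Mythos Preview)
Your approach is correct and is genuinely different from the paper's. The paper does not use the thick--thin decomposition at all. Instead it takes a pants decomposition, cuts each pair of pants into two right-angled hexagons (each of hyperbolic area $\pi$), subdivides one hexagon into four triangles, picks a triangle of area at least $\pi/4$, and then shows by elementary right-triangle trigonometry that the inradius of such a triangle is at least $\tanh^{-1}(\sin(\pi/24))>0.13$. The incenter is then a point at distance $\geq 0.13$ from the boundary of the (convex, embedded) hexagon, hence has injectivity radius $\geq 0.13$.

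So the paper's argument is a direct, constructive ``find a fat triangle and sit at its incenter'' computation; the only geometric input is the existence of a pants decomposition, and the constant $0.13$ is exactly what falls out of the angle $\pi/24$. Your argument is the complementary ``global area vs.\ thin part'' count. It gives a much better constant (roughly $\operatorname{arcsinh} 1$, as you note) but leans on the collar-lemma package: that short closed geodesics are simple, pairwise disjoint, and carry embedded collars of total area $<12(g-1)<4\pi(g-1)$. You correctly identify this as the nontrivial input and sketch how to get a crude version from J{\o}rgensen-type discreteness. For the purpose of the appendix---a quick, self-contained justification that some fixed-radius disc embeds---the paper's hexagon/incircle route is shorter, while yours is the more conceptual and quantitatively sharper one.
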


 \begin{proof}
     We recall that every closed hyperbolic surface has a pair of pants decomposition and that each pair of pants can be cut into two right angled hexagons. The lemma follows by showing that every right angled hexagon $H$ has a point $p\in H$ such that $\dist(p,\partial H) \geq 0.13$.

     To prove this, first note that by the hyperbolic area formula for polygons, the area of any right angled hexagon is $\pi$. Now we cut our hexagon into $4$ triangles, one of which has area at least $\pi/4$. Our claim follows by showing that the inradius of any such hyperbolic triangle $T$ is at least $0.13$. 

     To see this, let us denote our triangle $T$, with vertices $A,B,C$. Let the incenter be denoted by $O$ and let the incircle touch the three sides at $P,Q,R$. We can now partition $T$ into $6$ right angled triangles, one of which has area at least $\pi/24$. Let this triangle have vertices $A,O,P$ and we use corresponding lower-case letters to denote the angles at the respective vertices. Note that $p=\pi/2$ and the inradius is $\dist(O,P)$ which is minimized when $a=0$. When $a=0$, and so $o \leq \pi/2 - \pi/24$, we have that
     $$ \sin(\pi/24) \leq \cos(o) = \frac{\tanh{(\dist(O,P))}}{\tanh{(\dist(O,A))}} = \tanh{(\dist(O,P))}\,.$$
     So, we get that $\dist(O,P) \geq \tanh^{-1}(\sin(\pi/24)) > 0.13$.
 \end{proof}

\printbibliography

\end{document}